\documentclass{amsart}
\usepackage{amssymb}
\usepackage{xcolor}   
\usepackage{mathrsfs}
\usepackage{tcolorbox}
\makeatletter
\def\l@subsection{\@tocline{2}{0pt}{2pc}{5pc}{}}
\makeatother
\usepackage[colorlinks=true, linkcolor=blue, citecolor=blue, urlcolor=blue]{hyperref}
\newtheorem{theorem}{Theorem}[section]
\newtheorem{lemma}[theorem]{Lemma}

\newtheorem{corollary}[theorem]{Corollary}
\theoremstyle{definition}
\newtheorem{definition}[theorem]{Definition}
\newtheorem{example}[theorem]{Example}

\theoremstyle{remark}
\newtheorem{remark}[theorem]{Remark}
\numberwithin{equation}{section}
\newcommand{\D}{\mathbb{D}}

\def\bigno{\bigskip \noindent}
\def\medno{\medskip \noindent}
\def\smallno{\smallskip \noindent}
\def\bignobf#1{\bigskip \noindent \textbf{#1}}
\def\mednobf#1{\medskip \noindent \textbf{#1}}

\def\bigno{\bigskip \noindent}
\def\medno{\medskip \noindent}
\def\smallno{\smallskip \noindent}
\def\bignobf#1{\bigskip \noindent \textbf{#1}}
\def\mednobf#1{\medskip \noindent \textbf{#1}}

\begin{document}

\title{Two Problems in Bergman Spaces with Non-radial Weights}
		
\author[X. Fang]{Xiang Fang}
	\address{Department of Applied Mathematics\\ National Yang Ming Chiao Tung University \\Hsinchu, Taiwan (R.O.C.) }
	\email{xfang@nycu.edu.tw}
	
	\author[F. Guo]{Feng Guo}
	\address{School of Mathematics \\ Nanjing University of Aeronautics and Astronautics\\ Nanjing 210016\\ P. R. China}  
\email{70207994@nuaa.edu.cn}

 	\author[S. Hou]{Shengzhao Hou}
 	\address{School of Mathematics Science\\ Soochow University\\ Suzhou 215006\\ P. R. China }
\email{shou@suda.edu.cn}
	
 		\author[Q. Zhou]{Qi Zhou}
 	\address{School of Mathematics Science\\ Soochow University\\ Suzhou 215006\\ P. R. China }
	\email{zhouqi@suda.edu.cn}
\subjclass{47B37; 30B20}
\keywords{random analytic functions; weighted Bergman spaces; non-radial weights;  bounded hyperbolic oscillation; reverse Carleson.}

\dedicatory{}

\date{}

\begin{abstract}
This paper investigates two problems unified by the study of the uniform boundedness of the dilation operators (UBD)
\[
T_r f(z)=f(rz), \qquad 0<r<1,
\]
acting on weighted Bergman spaces \(A^p_\omega\) with   not necessarily radial weights.
We first characterize the random symbol space for   $A^p_\omega$   under a mild admissible condition (Theorem \ref{thm:main-nonradial}). This extends the main result of \cite{IMRN} from radial weights to non-radial weights. 
We then introduce two new notions, namely non-radial mixed norm spaces \(\mathcal{M}(p,q;\omega)\) and analytic tent spaces \(A(p,q;\omega)\), and we characterize their corresponding symbol spaces as well (Theorem \ref{thm:H-pq}, Theorem \ref{thm:tent}).
The novelty here is to employ a measure-disintegration framework to prove a general Littlewood-type theorem: $$(\mathcal{M}(p,q;\omega))_{\star} = H(2,q;\omega_\mathrm{r}),$$ from which the Bergman space result $(A^p_\omega)_\star = H(2,p;\omega_\mathrm{r})$ follows as the special case $p=q$. Among other things, UBD plays a pivotal role in the proofs of the preceding theorems.

\bigno The second main problem addressed in this paper is to establish a  local-to-global criterion for UBD, which remains largely unexplored for non-radial weights. 
 Our principle result in this part (Theorem \ref{thm:BHO}) asserts that UBD is guaranteed by two local geometric conditions: bounded hyperbolic oscillation (BHO) and a reverse-Carleson  tail condition (RC). This is the technical heart of the paper.
 
 \bigno 
In the course of our investigation, three  new types of problems arise naturally, each of independent interest:

 \begin{itemize}
     \item a two-weight top-maximal operator (Theorem \ref{thm:N-iff});
     \item a truncated maximal operator over hyperbolic balls (Theorem \ref{Hardy-Littlewood maiximal operator}); and
     \item a single-testing Carleson embedding problem (Theorem \ref{thm:WSMP-LD}).
 \end{itemize}

\end{abstract}

\maketitle

\newpage
\tableofcontents

\newpage
\section{Introduction and main results}

\noindent
Let \(\omega\) be a weight on the unit disk \(\mathbb{D}\subset \mathbb{C}\), and denote by \(A_\omega^{p}\) the corresponding weighted Bergman space for \(0<p<\infty\). When \(\omega\) is non-radial, both the function theory and the operator theory on \(A_\omega^{p}\) exhibit a remarkably rich structure  (see, e.g., \cite{Hu-Lv-Schuster-2019,Korhonen-Rattya-2019,Pelaez-Rattya-2024}), involving a delicate interplay between harmonic analysis, functional analysis, and complex analysis. Among the non-radial weights, a particularly significant class is the \(B_p\) weights (see, e.g., \cite{AlemanPottReguera19,Bekolle-Bonami-1978,Constantin-2010-Carleson})—introduced in analogy with the classical Muckenhoupt \(A_p\) condition for Euclidean spaces (see, e.g., \cite{Coifman-Fefferman-1974,Grafakos-Classical-3rd,Muckenhoupt-Wheeden-1973})—which provides an appropriate framework for developing
the \(L^{p}\)-theory of the Bergman projection on \(A_\omega^{p}\). Nonetheless, many other natural operators acting on these spaces remain poorly understood, and current techniques appear insufficient to treat them in a systematic way.

\bigskip

\noindent In this paper we investigate two problems that arise naturally in connection with a fundamental operator on weighted Bergman spaces, namely the family of dilation operators
\[
T_r f(z)=f(rz), \qquad 0<r<1.
\]
Although \(T_r\) is expected to be bounded in essentially all natural settings, there is currently no non-trivial quantitative description of its boundedness in the non-radial regime. This seems to stem more from the intrinsic complexity of non-radial weights than from any lack of significance  in the problem.
Moreover, \(T_r\) is closely tied to the radial maximal operator, whose boundedness with respect to non-radial weights remains an open problem. In the existing literature, indeed, the uniform boundedness  of \(T_r\) is often treated as a canonical assumption, and  little is known about which non-radial weights actually satisfy it.

\medskip

\noindent \textbf{Littlewood's theorem for random analytic functions.} Our first goal in this paper is to extend the Littlewood-type theorem of \cite{IMRN} for random Bergman functions to the non-radial setting. In \cite{IMRN} it was shown that the random symbol space associated with the weighted Bergman space \(L_a^p(\mathbb{D},dA_\alpha)\), where \(dA_\alpha=(1-|z|)^\alpha dA\), is a mixed norm space. More precisely,
\[
\big(L_a^p(\mathbb{D},dA_\alpha)\big)_\star = H(2,p;\tfrac{\alpha+1}{p}), \qquad 0<p<\infty.
\]
Our first main result generalizes this equality to weighted Bergman spaces \(A_\omega^p\) for a broad class of non-radial weights \(\omega\) satisfying a mild structural condition that we call \emph{admissibility}; see Definition \ref{Admissible-Weight}.

\medno 
Admissible weights are tightly connected to the uniform boundedness of the dilation operators \(\{T_r\}_{0<r<1}\) on \(A_\omega^p\). We undertake a detailed analysis of this problem and show that uniform boundedness is intertwined with a geometric regularity condition known as \emph{bounded hyperbolic oscillation} (BHO), together with a complementary notion that we call \emph{reverse Carleson}. The importance of BHO as a local regularity constraint was highlighted in the work of Aleman et al.\ \cite{AlemanConstantin09, AlemanPottReguera19, LimaniNicolau2023}, where it emerged as a central tool in the  theory of weighted analytic function spaces, particularly in connection with B{\`e}koll{\`e}–Bonami weights.

\medno We now turn to the precise formulation of these problems and the detailed statement of our first main result.

\newpage
\subsection{From Bergman  spaces to non-radial mixed norm spaces and tent spaces}

\smallno A classical theorem of Littlewood from the 1930's asserts that the random series \[\sum_{n\ge 0} \pm a_{n} z^{n}\] belongs to the Hardy space \(H^{p}(\mathbb{D})\) for every \(p \in (0,\infty)\) whenever the coefficients are square-summable, that is, whenever \(\sum_{n\ge0} a_{n} z^{n} \in H^{2}(\mathbb{D})\). Conversely, if the coefficients fail to be square-summable, then the random series admits no non-tangential boundary values and, in particular, does not lie in any Hardy space \cite{Littlewood1926, Littlewood1930, Paley1930B}. This circle of ideas has been systematically developed and presented in Kahane’s monograph \cite{Kahane1985}.

\medno 
To fix notation, we call a sequence of independent, identically distributed random variables a \emph{standard random sequence} if each variable is of one of the classical types: Bernoulli, Steinhaus, or Gaussian \(\mathcal{N}(0,1)\).
Let \(X=(X_{n})_{n\ge0}\) be a standard random sequence. For an analytic function \(f(z)=\sum_{n\ge0} a_{n} z^{n}\in H(\mathbb{D})\), we define its randomized Taylor series with respect to \(X\) by
\[
(\mathcal{R}_{X} f)(z) := \sum_{n\ge0} a_{n} X_{n} z^{n}.
\]

\begin{definition}[Random symbol space]
    The random symbol space of a (quasi-) Banach space $\mathcal{X}$ of analytic functions over the unit disk is
\[
(\mathcal{X})_{\star}:=\{f\in H(\mathbb{D}): \ \mathbb{P}(R_{X}f\in \mathcal{X})=1\}.
\]
\end{definition}

\medno Littlewood's theorem then  asserts that
\[(H^p(\mathbb{D}))_\star=H^2(\mathbb{D}), \qquad 0<p<\infty.\]

\medno Beyond the Hardy setting, this line of research was developed further by many authors, including Paley–Zygmund \cite{Paley1930A,Paley1930B}, Salem–Zygmund \cite{Salem1954}, Marcus–Pisier \cite{Marcus1978}, Anderson–Clunie–Pommerenke \cite{Anderson1974}, and Sledd \cite{Sledd1981A}. Their work established key criteria for determining when a random Taylor series belongs to various other classical analytic function spaces, such as \(H^\infty\), the Bloch space, and BMOA.
Among these spaces, obtaining a complete description of the random symbol space for BMOA has proven particularly difficult and remains an open problem. Recently, Nishry-Paquette \cite{Nishry2023} obtained new conditions ensuring that Gaussian analytic functions belong to BMOA. In a related direction, Konyagin-Queff{\`e}lec-Saksman-Seip \cite{Konyagin2022} established a sufficient criterion for random Dirichlet series to lie in BMOA.

\subsubsection{Weighted Bergman space}
\medno Our first main result for non-radially weighted Bergman spaces is the following: 

\begin{theorem}
\label{thm:main-nonradial}

Let $\omega$ be an admissible  weight on $\mathbb{D}$ (see Definition \ref{Admissible-Weight} below) and   $\{X_{n}\}$  
a standard random sequence. If $0<p<\infty$, then
\begin{equation}\label{eq:main-nonradial}
(A_{\omega}^{p})_{\star} = H(2,p;\omega_\mathrm{r}).
\end{equation}
\end{theorem}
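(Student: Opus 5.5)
The plan is to reduce the non-radial statement to a radial one through the Khintchine–Kahane inequality, Fubini, and the uniform boundedness of the dilations that admissibility supplies. The guiding heuristic is that for a random series, $\mathbb{E}|\mathcal{R}_X f(z)|^p \asymp \big(\sum_n |a_n|^2|z|^{2n}\big)^{p/2}$, which is a radial quantity. Integrating against $\omega$ therefore only sees the radial part of the weight, namely $\omega_\mathrm{r}(s)$, the angular average (disintegration) of $\omega$ over the circle $|z|=s$. I take $H(2,p;\omega_\mathrm{r})$ to consist of those $f$ with
\[
\int_0^1 M_2(s,f)^p\,\omega_\mathrm{r}(s)\,s\,ds<\infty .
\]

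\textbf{Sufficiency.} Let $f\in H(2,p;\omega_\mathrm{r})$. By Fubini and Khintchine–Kahane (valid for Bernoulli, Steinhaus and Gaussian sequences),
\[
\mathbb{E}\,\|\mathcal{R}_X f\|_{A^p_\omega}^p=\int_{\mathbb{D}}\mathbb{E}|\mathcal{R}_Xf(z)|^p\,\omega(z)\,dA(z)\asymp\int_{\mathbb{D}} M_2(|z|,f)^p\,\omega(z)\,dA(z).
\]
Disintegrating $\omega\,dA$ along circles turns the right-hand side into $\int_0^1 M_2(s,f)^p\,\omega_\mathrm{r}(s)\,s\,ds$. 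The expectation is finite, so $\mathcal{R}_Xf\in A^p_\omega$ almost surely.

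\textbf{Necessity.} This is the main obstacle. Suppose $\mathbb{P}(\mathcal{R}_Xf\in A^p_\omega)=1$. Finiteness almost surely does not immediately give finite expectation, so I proceed in two steps.

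First, I pass to a symmetric setting. By symmetrization (replacing $X$ by Rademacher signs times $X$, or using an independent copy), it suffices to treat Bernoulli-type symmetric sequences. In that case $\{\mathcal{R}_Xf\in A^p_\omega\}$ is a tail-type event to which the Kahane/Fernique integrability principle applies: a symmetric random vector that lies almost surely in a quasi-Banach space has all moments of its norm finite. Concretely, apply it to the truncations $S_N=\sum_{n\le N}a_nX_nz^n$. These are polynomials and hence lie in $A^p_\omega$, assuming polynomials belong to $A^p_\omega$, which is part of admissibility.

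Second, I need $\sup_N\|S_N\|_{A^p_\omega}$ finite almost surely, or, more conveniently, to replace partial sums by the dilations $T_r\mathcal{R}_Xf=\mathcal{R}_X(T_rf)$. Here UBD enters: $\|T_r g\|_{A^p_\omega}\le C\|g\|_{A^p_\omega}$ uniformly in $r$. Applying it to $g=\mathcal{R}_Xf$ gives
\[
\sup_r\|\mathcal{R}_X(T_rf)\|_{A^p_\omega}<\infty\quad\text{a.s.}
\]
For fixed $r$ the series $\mathcal{R}_X(T_rf)$ converges in $A^p_\omega$, so Kahane's contraction principle and Lévy-type inequalities give $\mathbb{P}\big(\|\mathcal{R}_X(T_rf)\|>t\big)\le 2\,\mathbb{P}\big(\|\mathcal{R}_Xf\|>t/C\big)$. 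Choosing $t$ with the right-hand side below $1/8$, Kahane's inequality (hypercontractivity of symmetric sums) gives $\mathbb{E}\|\mathcal{R}_X(T_rf)\|^p\le C't^p$ uniformly in $r$. Applying the sufficiency computation in reverse to $T_rf$ yields
\[
\int_0^1 M_2(rs,f)^p\,\omega_\mathrm{r}(s)\,s\,ds\le C''
\]
uniformly in $r$. Monotone convergence as $r\to1$ then gives $f\in H(2,p;\omega_\mathrm{r})$. For Gaussian and Steinhaus sequences, the non-symmetric case is handled by the comparison principle between these sequences and Rademacher ones. Since the event is measurable with respect to the sequence, the reduction is standard.

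I expect the delicate point to be justifying the tail estimate uniformly in $r$ in a quasi-Banach space when $p<1$. There I would use the $p$-norm $\|\cdot\|^p$, which is subadditive, and run Kahane's contraction argument with it. UBD is exactly the hypothesis that makes the bound uniform in $r$.
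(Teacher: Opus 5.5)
Your proposal is correct and follows the paper's route. Theorem \ref{thm:main-nonradial} is the case $p=q$ of the proof of Theorem \ref{thm:H-pq}: the Fubini--Khintchine--Kahane identity $\mathbb{E}\|\mathcal{R}_Xf\|_{A^p_\omega}^p\asymp\|f\|_{H(2,p;\omega_\mathrm{r})}^p$ combined with the moment criterion of Lemma \ref{lem:moment-criteria}, whose nontrivial direction (almost sure membership implies finite moments) the paper delegates to \cite{IMRN} via the dilation structure of admissible weights. Your UBD argument (tail bounds for the norm-convergent series $\mathcal{R}_X(T_rf)$ obtained pathwise from UBD, Kahane's integrability uniformly in $r$, then monotone convergence) is a correct explicit version of that step. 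The symmetrization and comparison detour is unnecessary, since Bernoulli, Steinhaus and Gaussian sequences are already symmetric.
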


\noindent Here we recall: 

\begin{definition}
    For $0<p<\infty$, the weighted Bergman space $A_{\omega}^{p}$ consists of all analytic functions $f$ on $\mathbb{D}$ such that
\[
\|f\| = (\int_{\mathbb{D}}|f(z)|^{p}\omega(z)dA(z))^{1/p}<\infty.
\]
\end{definition}

 \begin{definition}[The radial-circular decomposition]\label{Def:R-c-decomposition}
We disintegrate a measure $\omega$  based on its radial average:
\[
\overline{\omega}(s):=\frac{1}{2\pi}\int_{0}^{2\pi}\omega(se^{i\theta})d\theta.
\]
Then we define:
\begin{itemize}
    \item \emph{the radial projection measure} 
    $d\omega_\mathrm{r}(s):=\overline{\omega}(s)2sds$ on $(0, 1)$; and
    \item  \emph{the circular probability measures} $d\omega_{\mathrm{c}}(\theta;s):=\frac{\omega(se^{i\theta})}{\overline{\omega}(s)}\frac{d\theta}{2\pi}$ (for $\overline{\omega}(s)>0$).
\end{itemize} 
\end{definition}

\noindent For a radial weight $\omega$, we define the mixed norm spaces as follows: 

\begin{definition}[Radial mixed norm space]
For $0<p,q<\infty$, define $H(p,q;\omega)$ as the set of analytic functions $f$ on $\mathbb{D}$ such that
\[
\|f\|_{H(p,q;\omega)}^q=\int_{0}^{1} M_p^{\,q}(f,r)\,\omega(r)\,dr<\infty.
\]
\end{definition}

\noindent As the next step, now it is natural to attempt to define a mixed norm space \(H(p,q;\omega)\) for a non-radial weight \(\omega\) and, in view of \eqref{eq:main-nonradial}, to seek a characterization of \[(H(p,q;\omega))_{\star}.\] Such an extension, however, demands for caution. A naive definition would be
\[
H(p,q;\omega)
    =\left\{ f\in H(\mathbb{D}) : 
      \int_{0}^{1} 
      \left( \int_{0}^{2\pi} |f(re^{i\theta})|^{p}\,\omega(re^{i\theta})\,\frac{d\theta}{2\pi} \right)^{q/p}
      dr < \infty
    \right\}.
\]
But for general non-radial weights there is no principled justification for placing \(\omega\) inside the angular integral and it is awkward to keep a mere $dr$ outside. 
In summary, while extending weighted Bergman spaces to general weights is straightforward, the mixed norm setting is significantly more delicate, especially in the non-radial regime. This issue is addressed in the next subsection.

\subsubsection{Mixed norm spaces}

\begin{definition}[Non-radial mixed norm space]\label{Def:Non-Radial-Mixed-Norm-Space}
For $0<p,q<\infty$ and any weight $\omega$ on $\mathbb{D}$, we define the space $\mathcal{M}(p,q;\omega)$ to be the set of $f\in H(\mathbb{D})$ for which
\begin{equation}\label{eq:Non-Radial-Mixed-Norm-Space}
\|f\|_{\mathcal{M}(p,q;\omega)}^q=\int_{0}^{1}\left(\int_{0}^{2\pi}|f(se^{i\theta})|^{p}d\omega_{\mathrm{c}}(\theta;s)\right)^{q/p}d\omega_\mathrm{r}(s)<\infty.
\end{equation}
\end{definition}
\begin{remark}
    If $\omega$ is radial, then $\mathcal{M}(p,q;\omega)=H(p,q;\omega)$.
\end{remark}
 
\begin{theorem}%
\label{thm:H-pq}
Let $\omega$ be an admissible  weight on $\mathbb{D}$ (see Definition \ref{Admissible-Weight} below) and   $\{X_{n}\}$  a
standard random sequence. If $0<p,q<\infty$, then
\begin{equation}
(\mathcal{M}(p,q;\omega))_{\star} = H(2,q;\omega_\mathrm{r}).
\end{equation}
\end{theorem}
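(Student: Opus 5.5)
Our plan is to prove the two inclusions separately. The workhorse is Khintchine--Kahane type moment equivalence for a standard random sequence: for fixed $z$, $\mathbb{E}|\mathcal{R}_X f(z)|^p \asymp \big(\sum_n |a_n|^2|z|^{2n}\big)^{p/2} = M_2(f,|z|)^p$. The constants depend only on $p$ and on the type of $X$. The key feature is that the right-hand side is radial, so the circular probability measures $\omega_{\mathrm c}(\cdot;s)$ become invisible after averaging.

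\emph{Sufficiency:} $H(2,q;\omega_\mathrm{r})\subset(\mathcal{M}(p,q;\omega))_\star$. First take $q\le p$. By Jensen/Hölder in $\mathbb{E}$ (concavity of $t\mapsto t^{q/p}$) and Fubini,
\[
\mathbb{E}\|\mathcal{R}_Xf\|^q_{\mathcal{M}(p,q;\omega)}\le\int_0^1\Big(\int_0^{2\pi}\mathbb{E}|\mathcal{R}_Xf(se^{i\theta})|^p\,d\omega_{\mathrm c}(\theta;s)\Big)^{q/p}d\omega_\mathrm{r}(s)\asymp\int_0^1 M_2(f,s)^q\,d\omega_\mathrm{r}(s),
\]
since $\omega_{\mathrm c}(\cdot;s)$ is a probability measure. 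Hence the norm is finite almost surely.

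For $q>p$ this Jensen step goes the wrong way. Instead we will use Kahane's inequality for the vector-valued Rademacher/Gaussian sum in $L^p(\omega_{\mathrm c}(\cdot;s))$: $\mathbb{E}\|\mathcal{R}_Xf(s\cdot)\|^q_{L^p(\omega_{\mathrm c})}\asymp(\mathbb{E}\|\cdot\|^p)^{q/p}\asymp M_2(f,s)^q$. Fubini in $s$ then gives the same bound. Note that no admissibility is needed for this direction.

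\emph{Necessity:} $(\mathcal{M}(p,q;\omega))_\star\subset H(2,q;\omega_\mathrm{r})$. Suppose $\mathcal{R}_Xf\in\mathcal{M}(p,q;\omega)$ almost surely. A Fernique/Kahane-type integrability argument (the norm is a measurable (quasi-)norm of a symmetric random series, and by a zero-one law finiteness is almost sure) gives $\mathbb{E}\|\mathcal{R}_Xf\|^{q}<\infty$. This is the first place where we cannot simply average pointwise. For large $q$ we first pass to lower moments $\mathbb{E}\|\cdot\|^{\epsilon}<\infty$ and use Kahane equivalence of all moments.

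The main obstacle is the lower estimate $\mathbb{E}\big(\int|\mathcal{R}_Xf(se^{i\theta})|^p d\omega_{\mathrm c}\big)^{q/p}\gtrsim M_2(f,s)^q$. Here the expectation sits outside and $\omega_{\mathrm c}(\cdot;s)$ may concentrate on small arcs. Kahane gives $\mathbb{E}\|\cdot\|_{L^p(\omega_{\mathrm c})}^q\asymp(\int\mathbb{E}|\mathcal{R}_Xf|^2d\omega_{\mathrm c})^{q/2}$ only when $p\ge$ some range. In general we use the contraction principle, Paley--Zygmund, and the Gaussian case to get $\mathbb{E}\|\cdot\|_{L^p(\omega_{\mathrm c})}\gtrsim \|(\mathbb{E}|\cdot|^2)^{1/2}\|_{L^p(\omega_{\mathrm c})}=M_2(f,s)$ (Kahane--Khintchine in $L^p$ lattices, valid for all $0<p<\infty$). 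Then Fubini yields $\int_0^1M_2(f,s)^qd\omega_\mathrm{r}(s)\lesssim\mathbb{E}\|\mathcal{R}_Xf\|^q<\infty$.

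Where admissibility enters is upgrading the almost-sure finiteness to a uniform moment bound. For this we will apply the dilations $T_r$: $\mathcal{R}_X(T_rf)$ are bounded analytic functions, so the moments are finite, and uniform boundedness of $T_r$ on $\mathcal{M}(p,q;\omega)$ lets us run the Fernique/Kahane argument with constants independent of $r$. Letting $r\to1$ with monotone convergence of $M_2(f,rs)$ then finishes. If admissibility instead directly provides a doubling-type comparison of $\omega_\mathrm{r}$, we use it to pass from $M_2(f,rs)$ to $M_2(f,s)$.
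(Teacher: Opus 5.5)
\paragraph{Verdict.} There is a genuine gap, in the necessity direction. The overall skeleton is the paper's: a two-sided bound $\mathbb{E}Z_s^q\asymp M_2(f,s)^q$ at each radius (with $Z_s$ the $L^p(\omega_{\mathrm c}(\cdot;s))$-norm of $\mathcal{R}_Xf(s\,\cdot)$), then Fubini against $\omega_{\mathrm{r}}$, then a device turning almost-sure membership into a finite $q$-th moment. The gap is in that last device.

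\paragraph{Where it fails.} Kahane/Fernique-type integrability, as in Lemma \ref{lemma:F-Kahane-Khintchine}, concerns series that converge almost surely in the quasi-Banach space. Nothing tells you a priori that $\sum a_nX_nz^n$ converges in $\mathcal{M}(p,q;\omega)$, and the zero--one law only gives that finiteness is almost sure.

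Your repair assumes $\sup_r\|T_r\|<\infty$ on $\mathcal{M}(p,q;\omega)$. But admissibility (Definition \ref{Admissible-Weight}) concerns $T_r$ on $A^p_\omega$ only. For $p\neq q$,
$\|T_rg\|^q_{\mathcal{M}(p,q;\omega)}=\int_0^1\big(\int|g(rse^{i\theta})|^p\,d\omega_{\mathrm c}(\theta;s)\big)^{q/p}d\omega_{\mathrm{r}}(s)$
tests $g$ on the circle of radius $rs$ against $\omega_{\mathrm c}(\cdot;s)$. Nothing in the hypotheses relates that measure to $\omega_{\mathrm c}(\cdot;rs)$. Admissibility also gives no doubling for $\omega_{\mathrm{r}}$. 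The paper's proof has the same soft spot: it applies Lemma \ref{lem:moment-criteria}, which is formulated for $A^p_\omega$, to $\mathcal{M}(p,q;\omega)$ without comment.

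\paragraph{How to close it.} Truncate radially instead of dilating. For $\rho<1$ set
\[
Y_\rho:=\int_0^\rho Z_s^q\,d\omega_{\mathrm{r}}(s)\le\|\mathcal{R}_Xf\|^q_{\mathcal{M}(p,q;\omega)},
\qquad
F(\rho):=\int_0^\rho M_2(f,s)^q\,d\omega_{\mathrm{r}}(s).
\]
$F(\rho)$ is finite by finite mass. Your fixed-radius bounds at exponents $q$ and $2q$, together with Cauchy--Schwarz for $\mathbb{E}[Z_s^qZ_t^q]$, give $\mathbb{E}Y_\rho\ge cF(\rho)$ and $\mathbb{E}Y_\rho^2\le CF(\rho)^2$. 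Paley--Zygmund then yields
\[
\mathbb{P}\big(\|\mathcal{R}_Xf\|^q_{\mathcal{M}(p,q;\omega)}>\tfrac c2F(\rho)\big)\ge c^2/(4C)\quad\text{for every }\rho.
\]
If $F(\rho)\to\infty$, the norm would be infinite with positive probability. Hence $F(1)<\infty$, i.e.\ $f\in H(2,q;\omega_{\mathrm{r}})$. This argument uses neither dilations nor admissibility, which matches your correct remark that sufficiency needs none either.

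\paragraph{The fixed-radius estimate.} This part is sound and simpler than you make it. At fixed $s<1$ the series converges uniformly on $|z|=s$ almost surely. So Lemma \ref{lemma:F-Kahane-Khintchine} in $L^p(\omega_{\mathrm c}(\cdot;s))$ gives
\[
(\mathbb{E}Z_s^q)^{1/q}\asymp(\mathbb{E}Z_s^p)^{1/p}\asymp M_2(f,s)
\]
for all $0<p,q<\infty$. There is no range restriction, and no contraction or lattice argument is needed. For $q\ge p$, plain Jensen suffices: in $\theta$ for the upper bound, in $\mathbb{E}$ for the lower bound.

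For $q<p$ your Kahane input is genuinely needed. The paper's ``reverse Jensen'' for the concave map $t\mapsto t^{q/p}$ runs the wrong way.
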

\medno In the special case $p=q$, Theorem~\ref{thm:H-pq} recovers Theorem~\ref{thm:main-nonradial}.

\subsubsection{Weighted analytic tent space}
\smallno In parallel to $\mathcal{M}(p,q;\omega)$, we 
introduce yet another generalization of $A^p_\omega$—namely, a class of weighted analytic tent spaces-motivated by a discrete analogue of the weighted Bergman norm, realized via the Whitney decomposition (Definition \ref{Def:Whitney-Decomposition}) of the unit disk $\mathbb{D}$.

\begin{definition}[Weighted tent space]\label{Def:Weighted-Tent-Space}
For $0<p, q<\infty$, the
weighted tent space $T(p,q;\omega)$ consists of the collection of measurable functions $g$ on  $\mathbb{D}$  for which there exists a Whitney decomposition $\{Q_j\}$ of $\mathbb{D}$ such that
\begin{equation}\label{eq:def-weighted-tent}
    \|g\|_{T(p,q;\omega)}^{q}
:=\sum_{j}(\int_{Q_{j}}|g(z)|^{p}\omega(z)dA(z))^{q/p}<\infty.
\end{equation}
\end{definition}

\begin{remark}
The well-definedness of the above definition is treated in the Appendix (see Subsection \ref{subsec:Equivalence-Whitney-Decompositions}).
\end{remark}

\begin{definition}[Weighted analytic tent space]
\[A(p,q;\omega)=T(p,q;\omega)\cap H(\mathbb D).\]
\end{definition}

\smallno The random symbol space of $A(p,q;\omega)$ is then characterized by the following:

\begin{theorem} \label{thm:tent}
Let $\omega$ be an admissible weight and   $\{X_{n}\}$   a standard random sequence. If $0<p,q<\infty$ and $f\in H(\mathbb{D})$, then  $\mathcal{R}_X f\in A(p,q;\omega)$ if and only if 
\[
\sum_{j}(\int_{Q_{j}}M_{2}(f,|z|)^{p}\omega(z)dA(z))^{q/p}<\infty.
\]
\end{theorem}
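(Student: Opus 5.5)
We argue in two directions, both organized around the tent sum over the Whitney boxes. The backbone is Khintchine–Kahane: for fixed $z$, $\mathbb{E}|\mathcal{R}_Xf(z)|^p \asymp \big(\sum_n |a_n|^2|z|^{2n}\big)^{p/2} = M_2(f,|z|)^p$, uniformly in $z$, with constants depending only on $p$ and the type of $X$.

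\textbf{Sufficiency.} Suppose the displayed sum is finite. By Fubini and Khintchine,
\[
\mathbb{E}\int_{Q_j}|\mathcal{R}_Xf|^p\,\omega\,dA \asymp \int_{Q_j}M_2(f,|z|)^p\,\omega(z)\,dA(z)=:\beta_j .
\]
When $q\le p$, concavity of $t\mapsto t^{q/p}$ and Jensen give $\mathbb{E}\big(\int_{Q_j}|\mathcal{R}_Xf|^p\omega\big)^{q/p}\lesssim \beta_j^{q/p}$. Summing over $j$ shows that $\mathbb{E}\|\mathcal{R}_Xf\|_{T(p,q;\omega)}^q<\infty$, so $\mathcal{R}_Xf\in A(p,q;\omega)$ almost surely.

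When $q>p$, the same inequality holds by a different route. Minkowski's integral inequality in $L^{q/p}(\mathbb{P})$ gives
\[
\Big(\mathbb{E}\Big(\int_{Q_j}|\mathcal{R}_Xf|^p\omega\Big)^{q/p}\Big)^{p/q}\le \int_{Q_j}\big(\mathbb{E}|\mathcal{R}_Xf|^q\big)^{p/q}\omega .
\]
Kahane's inequality then yields $(\mathbb{E}|\mathcal{R}_Xf(z)|^q)^{1/q}\asymp M_2(f,|z|)$, which again bounds the left side by $\beta_j$. In both cases admissibility is not needed for this direction.

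\textbf{Necessity.} Suppose $\mathcal{R}_Xf\in A(p,q;\omega)$ almost surely. We proceed in the following order.

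First we apply a zero–one/closed-graph argument of the kind used in \cite{IMRN}. The event $\{\mathcal{R}_Xf\in A(p,q;\omega)\}$ is a tail event, so by Kahane's contraction principle there is a constant $C$ and a set of probability at least $1/2$ on which $\|\mathcal{R}_Xf\|_{T(p,q;\omega)}\le C$. Via the Kahane–Khintchine inequality for the (quasi-)normed space $T(p,q;\omega)$, this upgrades to $\mathbb{E}\|\mathcal{R}_Xf\|^{q}_{T(p,q;\omega)}<\infty$. This is legitimate because a Gaussian or Rademacher sum taking values in a quasi-Banach space, and bounded almost surely, has all moments finite.

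Next we need the reverse inequality $\mathbb{E}\big(\int_{Q_j}|\mathcal{R}_Xf|^p\omega\big)^{q/p}\gtrsim \beta_j^{q/p}$ for each $j$. When $q\ge p$ this is Jensen. When $q<p$ we apply Kahane–Khintchine in the Banach lattice $L^p(Q_j,\omega\,dA)$ (or its $p$-convex analogue when $p<1$): all moments of the norm of a Rademacher/Gaussian sum are comparable. Hence
\[
\mathbb{E}\|\mathcal{R}_Xf\|_{L^p(Q_j,\omega)}^q\asymp \big(\mathbb{E}\|\mathcal{R}_Xf\|_{L^p(Q_j,\omega)}^p\big)^{q/p}\asymp\beta_j^{q/p},
\]
with constants independent of $j$. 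Summing over $j$ gives the finiteness of the displayed sum.

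\textbf{Expected obstacle.} The main difficulty is that $\mathcal{R}_Xf$ is an infinite series, so the Kahane–Khintchine and integrability steps must be applied to the partial sums $\sum_{n\le N}a_nX_nz^n$ and then passed to the limit. Membership of the infinite sum does not directly control the partial sums. This is exactly where admissibility enters: the uniform boundedness of the dilations $T_r$ lets us control the dilates $\mathcal{R}_Xf(r\cdot)$ by $\mathcal{R}_Xf$ itself.

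We transfer this uniform boundedness from $A^p_\omega$ to $A(p,q;\omega)$ using the well-definedness results for the Whitney decomposition in Subsection \ref{subsec:Equivalence-Whitney-Decompositions}. The key point is that $T_r$ maps Whitney boxes into boundedly many boxes. The estimates are then carried out for the dilates with constants uniform in $r$, and we let $r\to1$ by monotone convergence. The constants must not depend on the box $Q_j$, and the case $p<1$ requires extra care, since there $L^p$ is only a quasi-Banach lattice.
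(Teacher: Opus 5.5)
Your architecture is the paper's: scalar Khintchine--Kahane on each Whitney box for the $p$-th moment, then a box-by-box comparison of $q$-th and $p$-th moments, summation over boxes, and an a.s.-to-moment upgrade. For the moment comparison the paper uses Lemma~\ref{lemma:F-Kahane-Khintchine} in $L^p(\mu_j)$ for both bounds; your Jensen/Minkowski route for the upper bound is a correct and more elementary variant. Sufficiency is fine as written, and you are right that it needs no admissibility.

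The gap is the upgrade in the necessity direction. The fact you quote needs the series to converge in $T(p,q;\omega)$, or at least to have a.s.\ bounded partial sums there, and neither is known. Your repair, transferring UBD from $A^p_\omega$ to $A(p,q;\omega)$, does not work.
\begin{itemize}
\item UBD on $A^p_\omega$ is one global inequality with no box-by-box content. For $F=\mathcal{R}_Xf$, the box integral $\int_{Q_j}|F(rz)|^p\omega(z)\,dA(z)$ equals $r^{-2}\int_{rQ_j}|F(w)|^p\omega(w/r)\,dA(w)$. For a non-radial admissible weight nothing makes $\omega(w/r)$ comparable to $\omega(w)$.
\item $\{rQ_j\}$ is not a Whitney decomposition. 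Each $rQ_j$ meets boundedly many boxes, but infinitely many $rQ_j$ pile up inside a single box near $|w|=r$. So Lemma~\ref{lem:Whitney-independence} does not apply, and for $q<p$ concavity of $t\mapsto t^{q/p}$ means such splitting only increases the sum, with no a priori bound.
\item For $q<p$ even polynomials need not lie in $A(p,q;\omega)$. Indeed $\|1\|_{T(p,q;\omega)}^q=\sum_j\mu(Q_j)^{q/p}$ diverges for $\omega\equiv1$ and $q\le p/2$. So the dilated series $\sum_n a_nr^nX_nz^n$ need not converge in the space at all.
\end{itemize}
The paper passes over the same point: it applies Lemma~\ref{lem:moment-criteria}, which is stated and proved for $A^p_\omega$ via dilation convergence, directly to $A(p,q;\omega)$. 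Your instinct that something must be proved here is right. Separately, the tail-event/contraction-principle remark is superfluous, since a.s.\ finiteness alone gives the constant $C$.

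The missing idea is to truncate in $j$ instead of dilating.
\begin{itemize}
\item Put $\Phi:=\|\mathcal{R}_Xf\|_{T(p,q;\omega)}^q=\sum_jY_j^q$, and $\Phi_J:=\sum_{j\in J}Y_j^q\le\Phi$ for a finite set $J$.
\item $\overline{\bigcup_{j\in J}Q_j}$ is a compact subset of $\mathbb{D}$, and the randomized series a.s.\ has the same radius of convergence as $f$. So the partial sums converge a.s.\ uniformly there, hence a.s.\ in $\bigoplus_{j\in J}L^p(\mu_j)$ with quasinorm $\Phi_J^{1/q}$, which is $s$-normed for $s=\min\{p,q,1\}$.
\item Lemma~\ref{lemma:F-Kahane-Khintchine} applies; its constants depend only on $s$ and the moment exponents, not on the space. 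It gives $\mathbb{E}\Phi_J^2\le K(\mathbb{E}\Phi_J)^2$ with $K=K(p,q)$ independent of $J$.
\item Paley--Zygmund then gives $\mathbb{P}(\Phi_J>\tfrac12\mathbb{E}\Phi_J)\ge 1/(4K)$.
\item Choose $M$ with $\mathbb{P}(\Phi>M)<1/(4K)$. If $\tfrac12\mathbb{E}\Phi_J>M$, then $\mathbb{P}(\Phi>M)\ge\mathbb{P}(\Phi_J>M)\ge\mathbb{P}(\Phi_J>\tfrac12\mathbb{E}\Phi_J)\ge 1/(4K)$, a contradiction. Hence $\mathbb{E}\Phi_J\le 2M$ for every finite $J$.
\item Monotone convergence gives $\mathbb{E}\Phi<\infty$, and your per-box lower bound $\mathbb{E}Y_j^q\gtrsim\beta_j^{q/p}$ finishes necessity.
\end{itemize}
With this repair the argument uses only $\mu(Q_j)<\infty$, not admissibility.
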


\noindent In particular, when $p=q$, Theorem \ref{thm:tent} reduces (equivalently) to Theorem \ref{thm:main-nonradial}.

\bigno Next we turn to the notion of UBD, which serves as the unifying theme of this paper. Its role in the proofs of the preceding theorems is to enable the use of the machinery developed in \cite{IMRN}; see, in particular, Lemma~\ref{lem:moment-criteria}.

\subsection{UBD: the analytic core of the admissible weight condition}

\subsubsection{UBD}
The admissible weight condition, which we require for our main theorems (Theorem \ref{thm:main-nonradial}, \ref{thm:H-pq}, \ref{thm:tent}), is fundamentally a statement about the analytic regularity of the space (see Definition \ref{Admissible-Weight} and Lemma~\ref{lem:Fspace-full}).  In fact, an admissible weight is one for which $A^p_\omega$ is well behaved in the sense that 
\begin{itemize}
    \item polynomials are dense in $A^p_\omega$, and
    \item  the dilation operators $\{T_r\}_{0<r<1}$ are uniformly bounded (UBD).
\end{itemize} 
That is, there exists $C>0$ such that
\[
\|T_{r}f\|\le C\|f\|\quad \text{for all } f\in A_{\omega}^{p} \text{ and } r\in(0,1).
\]
While this UBD property is analytically crucial, the literature has paid little attention to which general weights satisfy it, as far as we know. 

\medno The second problem we address in this paper is to establish sufficient criteria for UBD. In particular, we show that UBD is ensured by two local geometric properties of the weight: bounded hyperbolic oscillation and a reverse-Carleson tail condition.

\subsubsection{BHO and RC: a local criterion for UBD}
\begin{definition}[Bounded hyperbolic oscillation ]
For a fixed a hyperbolic radius $r_h>0$,  we say that $\omega$ has \emph{bounded hyperbolic oscillation} at scale $r_h$, if there exists an absolute constant $\Lambda$ such that 
\begin{equation}\label{eq:BHO}
\sup_{z\in\D}\ \sup_{w,w'\in B_{\rho_h}(z,r_h)} \frac{\omega(w)}{\omega(w')}\le \Lambda.
\end{equation}
A weight satisfying \eqref{eq:BHO} is said to be in $\mathrm{BHO}(r_h)$. %
\end{definition}

\begin{remark}
   The notion of BHO first appeared in \cite{AlemanPottReguera19}. Several other equivalent formulations  exist; some of these will appear in the sequel, particularly in the proofs of the main theorems.

\end{remark}

\begin{definition}[Carleson boxes, tops, and reverse--Carleson tail]\label{def:RC}
We identify the boundary $\partial\D$ with the torus $\mathbb{T} \cong [0, 1)$. For an arc $I\subset\partial\D$ with length $|I|$, the Carleson box and its top (thickness parameter $\delta \in(0,1)$ ) are
\[
S(I)=\left\{z \in \D:\ z/|z| \in I,\ 1-|I|< |z| <1\right\}\]
and 
\[T_\delta(I)=\left\{z \in \D:\ z/|z| \in I,\ 1-|I|< |z| <1-\delta|I|\right\}.
\]
For each integer $j\ge0$, let the $j$-th generation dyadic arcs on $\partial\mathbb{D}$ be
\[
I_{j,m}:=\left[m/2^{j},\ (m+1)/2^{j}\right),
\qquad 0\le m<2^j,
\]
with length $|I_{j,m}|=2^{-j}$. The collection of all such dyadic arcs is denoted by
\[
\mathcal{D}:=\left\{ I_{j,m}: j\ge 0,\ 0\leq m<2^j \right\}.
\]
The dyadic grid is the collection $\mathcal{Q} = \{Q_{j,m}\}_{I_{j,m} \in \mathcal{D}}$, where the dyadic box $Q_{j,m}$ is defined by
\[
Q_{j,m}:=\left\{z \in \D:\ z/|z|\in I_{j,m},\ 1-|I_{j,m}| \le |z| <1-|I_{j,m}|/2\right\}.
\]
Note that the boxes $\{Q_{j,m}\}$ are pairwise disjoint and form a partition of the annulus $\{z \in \D: 0 < |z| < 1\}$.

\medno We say that $\mu$ satisfies the \emph{reverse--Carleson tail condition} $\mathrm{RC}^\delta$ if for fixed $\delta \in (0,1)$, there exists  $\kappa>0$ such that for every dyadic arc $I\in \mathcal{D}$,
\begin{equation}\label{eq:RC}
\mu\big(S(I)\big)\ \le\ (\kappa+1)\,\mu\big(T_\delta(I)\big).
\end{equation}
\end{definition}

\medno Our main result concerning the second problem in this paper—namely, sufficient conditions for UBD—is the following:

\begin{theorem}[UBD from BHO + RC]\label{thm:BHO}
Let $0<p<\infty$, $r_h>0$ and $\mu=\omega\,dA$. If  $\omega\in\mathrm{BHO}(r_h)$ and $\mu$ satisfies $\mathrm{RC}^\delta$ for some $\delta\in(0,1)$, then UBD holds.
\end{theorem}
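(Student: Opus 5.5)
We need a plan to prove $\|f(r\cdot)\|_{A^p_\omega}\le C\|f\|_{A^p_\omega}$ uniformly in $0<r<1$.

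The approach is to reduce UBD to a discrete comparison over the dyadic grid $\mathcal{Q}$ and then control the transported mass by the reverse--Carleson tail. Fix $0<r<1$. The case $r\le 1/2$ is handled first. By subharmonicity of $|f|^p$ and BHO, $|f(rz)|^p$ is bounded by $C\int_{|w|<3/4}|f|^p\,dA$. Then RC (iterated from $S(\mathbb{T})$ down to the top $T_\delta$ of the whole disc) together with BHO on the compact set $\{|w|\le 3/4\}$ gives
\[
\int_{|w|<3/4}|f|^p\,dA\lesssim \int |f|^p\omega\,dA/\mu(\text{top}).
\]
This yields a bound with the constant $\mu(\mathbb{D})/\mu(\text{top})\le \kappa+1$. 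So assume $r>1/2$.

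\textbf{Step 1 (local pointwise control).} For $z\in Q_{j,m}$, the point $rz$ lies in some dyadic box $Q'$. Choose $r_h$-hyperbolic discs. Subharmonicity of $|f|^p$ on $B_{\rho_h}(rz,r_h)$ and BHO give
\[
|f(rz)|^p\lesssim \frac{1}{\mu(B(rz))}\int_{B(rz)}|f|^p\,d\mu .
\]
Covering $Q'$ by boundedly many such discs (the number depends only on $r_h$), we obtain
\[
\int_{Q}|f(rz)|^p\omega(z)\,dA(z)\lesssim \frac{\mu(Q)}{\mu(\widetilde{Q'})}\int_{\widetilde{Q'}}|f|^p\,d\mu,
\]
where $\widetilde{Q'}$ is a fixed hyperbolic enlargement of $Q'$. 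Here BHO is used twice: to replace $\omega(z)$ by averages, and to compare $\omega$ on $B(rz)$ with $\mu(\widetilde{Q'})/|\widetilde{Q'}|$. Note $|Q|\gtrsim |Q'|$ hyperbolically scaled up to the Jacobian factor $r^{-2}\le 4$.

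\textbf{Step 2 (summing over the preimage).} Summing over $j,m$ gives
\[
\|T_rf\|^p\lesssim \sum_{Q'}\Big(\frac{\sum_{Q:\,rQ\cap Q'\neq\emptyset}\mu(Q)}{\mu(\widetilde{Q'})}\Big)\int_{\widetilde{Q'}}|f|^p\,d\mu .
\]
The preimage $\{z: rz\in Q'\}$ is contained in a region whose arcs lie over $I'$ (the arc of $Q'$) and which is radially at least as deep as $Q'$. Thus the total $\mu$-mass of the boxes $Q$ feeding into $Q'$ is at most $\mu(S(\widehat{I'}))$ for a bounded dilate $\widehat{I'}$ of $I'$, and, more sharply, only those $Q$ inside the sub-box mapped onto $Q'$ matter. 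Since the enlargements $\widetilde{Q'}$ have bounded overlap, it suffices to prove the uniform bound
\[
\mu\big(\{z\in\mathbb{D}: rz\in Q'\}\big)\le C\,\mu(\widetilde{Q'}).
\]

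\textbf{Step 3 (the main obstacle: RC controls the tail).} The set $r^{-1}Q'\cap\mathbb{D}$ is a polar rectangle over $I'$, with radii between $(1-|I'|)/r$ and $\min\{1,(1-|I'|/2)/r\}$. When $1-r\lesssim |I'|$, this set is covered by boundedly many dyadic boxes hyperbolically close to $Q'$, and BHO gives the bound directly. When $1-r\ll |I'|$, the preimage can reach all the way to the boundary and is contained in $S(I')$ (up to neighbours). RC gives
\[
\mu(S(I'))\le(\kappa+1)\mu(T_\delta(I')),
\]
and $T_\delta(I')$ is a union of boundedly many (depending on $\delta$) boxes hyperbolically adjacent to $Q'$. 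By BHO, $\mu(T_\delta(I'))\lesssim \mu(\widetilde{Q'})$. The delicate point I expect here is handling arcs $I'$ not aligned with the dyadic grid and ensuring that the enlargement $\widetilde{Q'}$ contains $T_\delta(I')$. For this I would use the standard shifted-grid trick (the one-third trick) with a bounded number of neighbouring dyadic arcs, or simply apply RC to the two or three dyadic arcs of length $|I'|$ covering the relevant angular range. This gives $\|T_rf\|^p\le C(\Lambda,\kappa,\delta,r_h)\|f\|^p$ uniformly in $r$, which is UBD.
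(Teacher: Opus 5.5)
Your argument is correct, and it takes a genuinely different route from the paper's.

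\textbf{How the routes differ.} You fix $r$ and bound the transported mass box by box. The paper instead proves the stronger radial maximal inequality $\|\sup_{0<r<1}|f(r\,\cdot)|\|_{L^p(\mu)}\lesssim\|f\|_{L^p(\mu)}$, in four steps:
\begin{enumerate}
\item it dominates $\sup_r|f(rz)|^q$ pointwise by the top maximal operator $\mathcal{N}(|f|^q)(z)$;
\item it obtains weak $(1,1)$ for $\mathcal{N}$ from RC through Theorem~\ref{thm:N-iff}, using a stopping-time selection of maximal dyadic arcs and bounded overlap of the enlarged tops;
\item it gets an $L^\infty$ bound from $\mu(T_\delta(I)^\ast)\lesssim\mu(T_\delta(I))$;
\item it interpolates on $L^{p/q}(\mu)$ with $q<p$.
\end{enumerate}
Your route uses RC only as a single-box estimate. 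It needs no maximal operator, stopping time, interpolation or auxiliary exponent, so it is shorter and has explicit constants. The paper's maximal bound buys more, however. Since $\sup_r|f(r\,\cdot)|\in L^p(\mu)$ and $f(rz)\to f(z)$ pointwise, dominated convergence gives $T_rf\to f$ in $A^p_\omega$ directly, not just UBD.

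\textbf{Step~3 is simpler than you make it.} Because $z\mapsto rz$ preserves arguments, $r^{-1}Q'\cap\mathbb{D}$ lies over exactly the dyadic arc $I'$, at radii $\ge(1-|I'|)/r\ge 1-|I'|$. Hence $r^{-1}Q'\cap\mathbb{D}\subseteq S(I')$ for every $r$ and every box. RC applied at the dyadic arc $I'$ itself, plus BHO on the bounded-hyperbolic-diameter set $T_\delta(I')\cup Q'$, then gives $\mu(r^{-1}Q'\cap\mathbb{D})\le(\kappa+1)\mu(T_\delta(I'))\lesssim\mu(Q')$ uniformly in $r$ and $Q'$. Combining this with $\sup_{Q'}|f|^p\lesssim\mu(Q')^{-1}\int_{\widetilde{Q'}}|f|^p\,d\mu$ and the bounded overlap of the $\widetilde{Q'}$ finishes the proof. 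None of the following are needed: shifted grids or dilated arcs $\widehat{I'}$, a separate case $r\le 1/2$, or the Jacobian remark.

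\textbf{Your case split is reversed.} The preimage stays hyperbolically close to $Q'$ when $1-r\ll|I'|$. It reaches the boundary only when $|I'|/2\le 1-r<|I'|$, and it is empty when $1-r\ge|I'|$. The uniform inclusion into $S(I')$ makes this distinction irrelevant, so the proof is unaffected.
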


\medno It is natural to ask whether bounded hyperbolic oscillation alone already implies UBD.  BHO is, however, a purely local regularity condition and does not prevent the weight from losing mass along thin approach regions to the boundary, so it does not provide sufficient global control for the family $\{T_r\}_{0<r<1}$. The reverse--Carleson tail condition $\mathrm{RC}_\delta$ supplies exactly this missing global thickness by tying the mass of each Carleson box $S(I)$ to that of its top $T_\delta(I)$. In this regard, Theorem~\ref{thm:BHO} should be viewed as a genuinely joint consequence of BHO and $\mathrm{RC}_\delta$.

\subsection{Three auxiliary results}
\medno The remainder of this introduction is devoted to three further results that arose naturally in our analysis and are of independent interest. These questions are more purely harmonic-analytic in nature and may appeal to readers interested in that direction. We conclude by isolating three theorems corresponding to these auxiliary problems. They concern:

\begin{enumerate}
    \item a top maximal operator (Theorem \ref{thm:N-iff});
    \item a truncated maximal operator (Theorem \ref{Hardy-Littlewood maiximal operator}), and
    \item a single-testing Carleson embedding problem (Theorem \ref{thm:WSMP-LD}).
\end{enumerate}

\subsubsection{The top maximal operator}

\medno  The weak $(1,1)$ boundedness of the top maximal operator is a key step in proving Theorem~\ref{thm:BHO}. We establish this property by proving a more general characterization in a two-weight setting (see Theorem~\ref{thm:N-iff}).
Let us first clarify the definitions required for this two-weight setting. The first is the two-weight reverse Carleson tail condition, $\mathrm{RC}_{\mu\to\nu}^\delta$, which states that for fixed $\delta\in (0,1)$, there exists   $\kappa>0$ such that
\[
\nu\big(S(I)\big)\ \le\ (1+\kappa)\,\mu\big(T_\delta(I)\big)\quad\text{for all dyadic arcs }I\in \mathcal{D}.
\]
The second is the  top-maximal operator:
\[
\big(\mathcal{N}_{\mu}^\delta g\big)(z)\ :=\ \sup_{\substack{I\in\mathcal{D}\\ z\in S(I)}}\ \frac{1}{\mu\big(T_\delta(I)\big)}\int_{T_\delta(I)^\ast} |g|\,d\mu,
\]
where $T_\delta(I)^\ast$ is a thickening of $T_\delta(I)$ with bounded overlap, i.e., $ T_\delta(I)\subseteq T_\delta(I)^\ast$ and there exist $N>0$ such that, for $x\in \mathbb{D}$, $\sum_{I\in\mathcal{D}} \mathrm{1}_{T_\delta(I)^\ast}(x) \leq  C$.

\bigno
The operator $\mathcal{N}_{\mu}^\delta$ is a localized Hardy--Littlewood type maximal operator adapted to the Carleson geometry of $\D$.  
In particular, it is a genuinely two-weight object, and its weak $(1,1)$ boundedness from $L^1(\mu)$ to $L^{1,\infty}(\nu)$ encodes a balance between $\mu$ and $\nu$ in the spirit of classical two-weight results for maximal operators and Carleson embeddings (see for instance \cite{Gu92,KermanTorchinsky,LueckingRC} in the Euclidean, Hardy and Bergman settings). 
Our contribution here is that, in the present weighted Bergman setting, this balance is shown in Theorem~\ref{thm:N-iff} to be exactly equivalent to the reverse Carleson tail condition $\mathrm{RC}_{\mu\to\nu}^\delta$.

\begin{theorem} \label{thm:N-iff}
Let $\mu,\nu$ be two positive measures on $\D$ and $\delta \in (0,1)$. Then the condition 
$
\mathrm{RC}_{\mu\to\nu}^\delta$ holds if and only if  $\mathcal{N}_{\mu}^\delta$ is weak $(1,1)$.
\end{theorem}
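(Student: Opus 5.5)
Both directions rest on dyadic stopping-time arguments built from the nested structure of Carleson boxes. Recall that for dyadic arcs $I,J$ the boxes $S(I),S(J)$ are either nested or disjoint, according as $I,J$ are.

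\textbf{Sufficiency (RC $\Rightarrow$ weak $(1,1)$).} Fix $g\in L^1(\mu)$ and $\lambda>0$. Consider the collection $\mathcal{C}_\lambda$ of dyadic arcs $I$ with
\[
\frac{1}{\mu(T_\delta(I))}\int_{T_\delta(I)^\ast}|g|\,d\mu>\lambda .
\]
Then $\{\mathcal{N}^\delta_\mu g>\lambda\}=\bigcup_{I\in\mathcal{C}_\lambda}S(I)$. I will not pass to the maximal arcs. If $\{\mathcal{N}^\delta_\mu g>\lambda\}$ is nonempty, then $\sum_{I\in\mathcal{C}_\lambda}\mu(T_\delta(I))<\lambda^{-1}\sum_I\int_{T_\delta(I)^\ast}|g|\,d\mu\le C\lambda^{-1}\|g\|_{L^1(\mu)}$ by bounded overlap. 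So the total $\mu$-mass of tops is finite, and every chain in $\mathcal{C}_\lambda$ has a maximal element, except when the chain descends to arbitrarily large arcs. Large arcs are not a problem, since there is only the single generation-$0$ arc $[0,1)$. The maximal arcs $\{I_k\}$ are then pairwise disjoint, and their boxes $S(I_k)$ are pairwise disjoint and cover the level set. By $\mathrm{RC}^\delta_{\mu\to\nu}$,
\[
\nu\big(\{\mathcal{N}^\delta_\mu g>\lambda\}\big)\le\sum_k\nu(S(I_k))\le(1+\kappa)\sum_k\mu(T_\delta(I_k))\le\frac{(1+\kappa)C}{\lambda}\,\|g\|_{L^1(\mu)} .
\]
This is the weak $(1,1)$ bound. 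The degenerate case $\mu(T_\delta(I))=0$ needs a convention; I would read the ratio as $0$, or as $+\infty$ only when the integral is positive, which cannot happen because $\int_{T_\delta(I)}|g|\,d\mu=0$ there. There is one subtlety: the integral is over the thickening $T_\delta(I)^\ast$, not over $T_\delta(I)$. With the ratio taken as $0$ when $\mu(T_\delta(I))=0$, the estimate above is unaffected.

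\textbf{Necessity (weak $(1,1)$ $\Rightarrow$ RC).} Fix a dyadic $I$ and test with $g=\mathbf 1_{T_\delta(I)}$. For every $z\in S(I)$, the arc $I$ itself is admissible in the supremum, and $T_\delta(I)\subseteq T_\delta(I)^\ast$. Hence
\[
\mathcal{N}^\delta_\mu g(z)\ge\frac{\mu(T_\delta(I))}{\mu(T_\delta(I))}=1\qquad\text{whenever }\mu(T_\delta(I))>0 .
\]
So $S(I)\subseteq\{\mathcal{N}^\delta_\mu g>1/2\}$, and weak $(1,1)$ with constant $A$ gives $\nu(S(I))\le 2A\,\mu(T_\delta(I))$, which is $\mathrm{RC}^\delta_{\mu\to\nu}$ with $1+\kappa=2A$.

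\textbf{Main obstacle.} The main obstacle is the case $\mu(T_\delta(I))=0$ with $\nu(S(I))>0$. There the test function is trivial, so one must show the weak bound fails by another route. I would first try the following. Take $g=\mathbf 1_{T_\delta(I)^\ast}$; if it has positive $\mu$-mass, the operator is infinite on $S(I)$ under the natural convention $c/0=\infty$ for $c>0$, which contradicts weak $(1,1)$. Otherwise every $g$ gives value $0$ on the term for $I$. In that situation the intended convention $0/0=0$ makes the inequality genuinely fail, so the statement implicitly assumes $\mu(T_\delta(I))>0$ for all $I$, for example through the paper's $\mu=\omega\,dA$ with $\omega>0$. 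I would make this nondegeneracy assumption explicit.
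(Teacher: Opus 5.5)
Your proposal is correct and follows essentially the paper's proof. Sufficiency uses maximal dyadic arcs together with $\mathrm{RC}^\delta_{\mu\to\nu}$ and the bounded overlap of the $T_\delta(I)^\ast$, and necessity tests with an indicator; the paper uses $\mathbf 1_{T_\delta(I)^\ast}$ at level $(1-\epsilon)\mu(T_\delta(I)^\ast)/\mu(T_\delta(I))$, while your $\mathbf 1_{T_\delta(I)}$ at level $1/2$ gives the same conclusion. Your remark that the statement needs $\mu(T_\delta(I))>0$ is a valid point the paper's proof silently assumes: under the $0/0=0$ convention, $\mu=0$ with $\nu\neq 0$ satisfies weak $(1,1)$ trivially but violates $\mathrm{RC}^\delta_{\mu\to\nu}$.
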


\subsubsection{The truncated maximal operator}
 The truncated maximal operator 
is defined with respect to $0<r_1<r_0$ by
\[
M_{r_1,r_0}g(z):=\sup_{r_1<s\le r_0}\frac{1}{\mu\!\left(B_{\rho_h}(z,s)\right)}\int_{B_{\rho_h}(z,s)} |g(\zeta)|\,\mathrm{d}\mu(\zeta).
\]
for every locally integrable function $g$.

\bigno The operator $M_{r_1,r_0}$ differs substantially from the classical Hardy–Littlewood maximal operator in Euclidean analysis.  
It is \emph{truncated} in the hyperbolic radius parameter, being defined only for $0<r_1<r_0<r_h$, so that the supremum is taken over hyperbolic scales $s\in(r_1,r_0]$.  
This truncation is not merely technical: it is specifically designed to adapt the maximal operator to the intrinsic geometry of the hyperbolic disk.  
In the hyperbolic setting, the restriction to radii $s\in(r_1,r_0]$ is not merely a cosmetic truncation, but the key structural input that allows one to implement a Vitali-type selection procedure for hyperbolic balls; see Appendix~\ref{sec:Vitali of hyperbolic ball}. On this controlled range of scales, the geometry of $(\D,\rho_h)$ and the local behavior of $\mu$ are regular enough to carry out a hyperbolic Vitali covering argument, producing disjoint subfamilies and uniformly bounded overlap of dilated balls, which is the cornerstone of the weak $(1,1)$ estimate for $M_{r_1,r_0}$.

\bigno Under the bounded hyperbolic oscillation assumption on $\omega$, the measure $\mu=\omega\,dA$ is locally doubling with respect to $\rho_h$, and hence $(\D,\rho_h,\mu)$ behaves like a space of homogeneous type on small balls.
In this framework it is classical that Hardy–Littlewood type maximal operators associated with metric balls satisfy weak $(1,1)$ and strong $(p,p)$ estimates on $L^p(\mu)$, $1<p<\infty$ (see, for instance, \cite{DengHan-book} and the general theory of maximal operators on geometrically doubling metric measure spaces in \cite{Aldaz-avg}).
Theorem~\ref{Hardy-Littlewood maiximal operator} shows that, in our weighted Bergman setting, the same paradigm already applies at the level of the truncated maximal operator $M_{r_1,r_0}$: it enjoys the full scale of maximal-function type bounds under the single structural hypothesis $\omega\in\mathrm{BHO}(r_h)$.

\begin{theorem} \label{Hardy-Littlewood maiximal operator}
    Let $r_h>0$, $\mu=\omega\,dA$, $\omega\in\mathrm{BHO}(r_h)$ and $0<r_1<r_0$.
Then $M_{r_1,r_0}$ is bounded on $L^p(\mu)$ for $1<p<\infty$ and is of
weak type $(1,1)$.
\end{theorem}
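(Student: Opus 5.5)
The plan is the standard Vitali-covering argument for a space of homogeneous type, localized to hyperbolic scales $s\in(r_1,r_0]$. Strong $(p,p)$ then follows from weak $(1,1)$ by Marcinkiewicz interpolation with the trivial bound $\|M_{r_1,r_0}g\|_{L^\infty(\mu)}\le\|g\|_{L^\infty(\mu)}$. So the whole task is the weak $(1,1)$ inequality
\[
\mu\bigl(\{z\in\D: M_{r_1,r_0}g(z)>\lambda\}\bigr)\le \frac{C}{\lambda}\int_\D |g|\,d\mu .
\]
The first step is a \emph{local doubling} estimate: there is $C_d$ such that $\mu(B_{\rho_h}(z,3s))\le C_d\,\mu(B_{\rho_h}(z,s))$ for all $z\in\D$ and $s\in(r_1,r_0]$. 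To prove it, I will cover $B_{\rho_h}(z,3r_0)$ by a bounded number $N=N(r_1,r_0,r_h)$ of hyperbolic balls of radius $\min(r_1,r_h)/2$. This is possible with $N$ independent of $z$, because Möbius invariance of $\rho_h$ lets us move $z$ to $0$. On each small ball, $\omega$ is comparable (with constant $\Lambda$) to its value at any point of that ball, by $\mathrm{BHO}(r_h)$. Chaining these $N$ balls back to $B_{\rho_h}(z,r_1)$ gives $\omega\le\Lambda^{N}\omega(z)$ on $B_{\rho_h}(z,3r_0)$ and $\omega\ge\Lambda^{-1}\omega(z)$ on $B_{\rho_h}(z,r_1)$. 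The hyperbolic area of a hyperbolic ball depends only on its radius, and the Euclidean area $A(B_{\rho_h}(z,s))$ is comparable to $(1-|z|^2)^2\,c(s)$. Together these give
\[
\mu(B_{\rho_h}(z,3s))\le C(\Lambda,r_1,r_0)\,\mu(B_{\rho_h}(z,r_1))\le C_d\,\mu(B_{\rho_h}(z,s)).
\]

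The second step is covering. Fix $\lambda>0$ and a compact set $K\subset E_\lambda=\{M_{r_1,r_0}g>\lambda\}$; inner regularity of $\mu$ then recovers the full estimate. Each $z\in K$ has a ball $B_z=B_{\rho_h}(z,s_z)$ with $s_z\in(r_1,r_0]$ and $\mu(B_z)<\lambda^{-1}\int_{B_z}|g|\,d\mu$. Finitely many of these balls cover $K$. Since all radii lie in $(r_1,r_0]$, the usual greedy Vitali selection (largest radius first) applies in the metric space $(\D,\rho_h)$. It gives disjoint balls $B_1,\dots,B_m$ such that $K\subset\bigcup_i 3B_i$. This is presumably the content of Appendix~\ref{sec:Vitali of hyperbolic ball}, and only the triangle inequality for $\rho_h$ is needed. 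Then
\[
\mu(K)\le\sum_i\mu(3B_i)\le C_d\sum_i\mu(B_i)\le\frac{C_d}{\lambda}\sum_i\int_{B_i}|g|\,d\mu\le\frac{C_d}{\lambda}\|g\|_{L^1(\mu)} .
\]
Here $3s_i$ may exceed $r_0$. That is harmless, because the doubling estimate of the first step was proved for dilates up to $3r_0$.

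I will also check measurability of $M_{r_1,r_0}g$. This is routine: $\mu$ is absolutely continuous, so $s\mapsto\int_{B_{\rho_h}(z,s)}|g|\,d\mu$ is continuous from the left in $s$. Hence the supremum can be taken over rational $s$ (plus $s=r_0$), and $z\mapsto\mu(B_{\rho_h}(z,s))$ is continuous and strictly positive; positivity follows from BHO once $\omega\not\equiv0$.

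I expect the main obstacle to be the uniform doubling constant in the first step. Specifically, the covering number $N$ and the chaining must be uniform in $z$ up to the boundary, and the BHO scale $r_h$ may be much smaller than $r_0$. Möbius invariance makes $N$ independent of $z$, and iterating BHO along $N$ steps gives a constant of size $\Lambda^{N}$. That constant is finite but depends on $r_0/r_h$, which is acceptable since $r_1,r_0$ are fixed. The truncation from below, $s>r_1$, is what keeps the ratio $\mu(3B)/\mu(B)$ bounded with this argument.
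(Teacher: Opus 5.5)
Your proposal is correct and follows essentially the same route as the paper: a uniform local doubling bound obtained by chaining the BHO inequality across hyperbolic balls, a Vitali selection giving $\mu(E_\lambda)\lesssim\lambda^{-1}\|g\|_{L^1(\mu)}$, and Marcinkiewicz interpolation with the trivial $L^\infty$ bound. The differences are only technical. You run the finite greedy selection with $3B_i$ on compact $K\subset E_\lambda$ and then use inner regularity, whereas the paper applies its countable $5B_j$ selection lemma from the appendix directly to $E_\lambda$. One small correction: if $r_1>r_h$, your lower bound $\omega\ge\Lambda^{-1}\omega(z)$ on $B_{\rho_h}(z,r_1)$ needs a chained constant, or you can restrict to $B_{\rho_h}(z,\min(r_1,r_h))$; this does not affect the argument.
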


\subsubsection{Single-testing Carleson embedding}

Finally, Theorem~\ref{thm:WSMP-LD} provides a single-testing criterion for the Carleson embedding
\[
A^p_\omega\hookrightarrow L^p(\nu)
\]
based on minimal local information about the underlying measures. Classical Carleson embeddings for Hardy and Bergman spaces (see \cite{Carleson-Orig,Hastings,Oleinik-Pavlov}) are usually formulated in terms of global Carleson measure conditions or testing on a rich family of reproducing kernels. In contrast, our result shows that, in the present weighted Bergman setting, it suffices to compare $\nu$ with a reference measure $\mu$ on dyadic boxes and to assume two purely local structural properties for $\mu$: a weighted sub-mean property and a local doubling condition. These hypotheses are automatically satisfied when $\mu=\omega\,dA$ with $\omega\in\mathrm{BHO}(r_h)$, but the proof of Theorem~\ref{thm:WSMP-LD} uses only the local assumptions themselves and does not rely on bounded hyperbolic oscillation or any global doubling. Recall that:

\begin{enumerate}
    \item A positive measure $\mu$ has the \emph{weighted sub-mean property}  if there exists $C_{\mathrm{ws}}$ such that for  locally integrable  subharmonic function $g\geq 0$,
    \begin{equation}\label{eq:WSMP}
    g(w)\ \le\ \frac{C_{\mathrm{ws}}}{\mu\!\big(B_{\rho_h}(w,r_0)\big)}\int_{B_{\rho_h}(w,r_0)} g\,d\mu, \qquad w \in \mathbb{D}.
    \end{equation}

    \item A measure $\mu$ is said to be \emph{local doubling}  if for each $\kappa \ge 1$, there exists $D_\kappa \ge 1$ such that
    \begin{equation}\label{eq:LD}
    \mu\!\big(B_{\rho_h}(z,\kappa r_0)\big)\ \le\ D_\kappa\ \mu\!\big(B_{\rho_h}(z,r_0)\big)\qquad\forall z\in\D.
    \end{equation}
\end{enumerate}

\begin{theorem}\label{thm:WSMP-LD}
Let $\mu$, $\nu$ be two positive measures and $r_0>0$. If the weighted sub-mean property and local doubling property hold for $\mu$ at the fixed scale $r_0$, and
\[
\nu(Q)\ \le\ C\,\mu(Q)\qquad\text{for every dyadic box }Q,
\]
then for every $f\in A^p_\omega$ and $0<p<\infty$, there exists a constant $K=K(r_0,C,C_{\mathrm{ws}})$  such that 
\begin{equation}\label{eq:CE}
\int_{\D} |f|^{p}\,d\nu\ \le K \int_{\D} |f|^{p}\,d\mu.
\end{equation}
\end{theorem}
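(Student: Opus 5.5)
The plan is to discretize $\int_{\D}|f|^p\,d\nu$ along the dyadic grid $\mathcal{Q}$, apply the sub-mean estimate on each box, and then sum using bounded overlap. Write $\int_{\D}|f|^p\,d\nu=\sum_{Q\in\mathcal{Q}}\int_Q|f|^p\,d\nu$; the small disc $\{|z|<1/2\}$ is handled the same way or absorbed into a bounded family of boxes. On each box we use the crude bound
\[
\int_Q|f|^p\,d\nu\le \nu(Q)\sup_Q|f|^p\le C\,\mu(Q)\sup_Q|f|^p .
\]
This is the only place the single-testing hypothesis enters, and it needs no information about $\nu$ beyond its box masses.

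The next step controls $\sup_Q|f|^p$. Since $f$ is analytic, $g=|f|^p$ is subharmonic and nonnegative for every $0<p<\infty$. For $w\in Q$, the weighted sub-mean property \eqref{eq:WSMP} gives
\[
|f(w)|^p\le \frac{C_{\mathrm{ws}}}{\mu(B_{\rho_h}(w,r_0))}\int_{B_{\rho_h}(w,r_0)}|f|^p\,d\mu .
\]
Each dyadic box $Q$ has hyperbolic diameter bounded by an absolute constant $d_0$. Fix a point $z_Q\in Q$. Then for $w\in Q$ we have $B_{\rho_h}(w,r_0)\subset B_{\rho_h}(z_Q,r_0+d_0)$ and also $Q\subset B_{\rho_h}(w,d_0)$. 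We want $\mu(B_{\rho_h}(w,r_0))$ to be comparable from below to $\mu(Q)$. Local doubling \eqref{eq:LD} with $\kappa=1+d_0/r_0$ gives
\[
\mu(Q)\le\mu(B_{\rho_h}(w,\kappa r_0))\le D_\kappa\,\mu(B_{\rho_h}(w,r_0)).
\]
Consequently
\[
\mu(Q)\sup_Q|f|^p\le C_{\mathrm{ws}}D_\kappa\int_{B_{\rho_h}(z_Q,r_0+d_0)}|f|^p\,d\mu .
\]

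Finally we sum over $Q\in\mathcal{Q}$. The enlarged balls $\widetilde Q:=B_{\rho_h}(z_Q,r_0+d_0)$ have bounded overlap, at most $N=N(r_0)$ of them containing any point. This is because dyadic boxes are hyperbolically "Whitney": each has hyperbolic area bounded below, and a hyperbolic ball of fixed radius $R$ meets only boundedly many of them, a count depending only on $R$ by a hyperbolic area comparison. Hence
\[
\int_{\D}|f|^p\,d\nu\le C\,C_{\mathrm{ws}}D_\kappa N\int_{\D}|f|^p\,d\mu,
\]
which is \eqref{eq:CE} with $K$ depending on $r_0$, $C$, $C_{\mathrm{ws}}$ and the doubling constant $D_\kappa$ at the fixed scale $\kappa=1+d_0/r_0$.

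The main obstacle is the geometric bookkeeping rather than the analysis. We need uniform hyperbolic diameter bounds for the boxes $Q_{j,m}$, including the coarse generations $j=0,1$ near the origin, where boxes are not small in hyperbolic terms but are finite in number. We also need the bounded-overlap count for the enlarged balls. If the diameter of the coarse boxes causes trouble, the first fix to try is to treat the finitely many boxes with $j\le j_0$ separately: cover each by finitely many hyperbolic balls of radius $r_0$ and apply the sub-mean estimate ball by ball.
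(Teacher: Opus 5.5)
Your proposal is correct and is essentially the paper's proof: bound each box by $\nu(Q)\sup_Q|f|^p\le C\mu(Q)\sup_Q|f|^p$, use the weighted sub-mean property with local doubling at a fixed dilation $\kappa$ to turn this into an integral of $|f|^p\,d\mu$ over an enlarged set, and sum using bounded overlap. The only cosmetic difference is that you compare $\mu(Q)\le\mu(B_{\rho_h}(w,\kappa r_0))$ directly, whereas the paper packages this as the sandwich $B_{\rho_h}(z,r_0)\subseteq Q^*\subseteq B_{\rho_h}(z,\kappa r_0)$ and uses $\mu(Q)\le\mu(Q^*)$; as in the paper's final constant, your $K$ also depends on $D_\kappa$ and the overlap number, which the stated $K(r_0,C,C_{\mathrm{ws}})$ suppresses.
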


\newpage
\subsection{Methodology overview}
\subsubsection{Outline of Theorem \ref{thm:H-pq} }
(1) Apply a \emph{measure–disintegration argument} to split the non-radial weight $\omega$ into its radial and angular components $\omega_r$ and $\omega_{\mathrm{c}}(\theta;r)$.
  (2) Represent $d\mu(z)=d\omega_{\mathrm{c}}(\theta;r) d\omega_r(r)$ to separate the $L^p$-norm into inner angular and outer radial integrals.
(3)  Define the random angular $L^p$-quantity 
 \eqref{Lp-inquality}. Using the Khintchine–Kahane inequality and some elementary inequalities to convert random $L^p$-moments into deterministic $L^2$-moments(\ref{eq:angular-moment-step1}).

\subsubsection{Outline of Theorem~\ref{thm:tent}}
(1) Define local measures $d\mu_j=\omega\,\mathbf{1}_{Q_j}\,dA$. Introduce the random local $L^p$-quantities
  \[
Y_j=\Big(\!\int_{Q_j}|\mathcal{R}_X f|^p\,d\mu_j\Big)^{1/p}.
  \]Apply the Khintchine–Kahane inequality to show 
  \(
  \mathbb{E}Y_j^p \asymp \int_{Q_j}M_2(f,|z|)^p\,d\mu_j.
  \)
(2) Use the vector-valued extension in the Banach ($p$-Banach) space $L^p(\mu_j)$ to obtain 
  \(\mathbb{E}Y_j^q \asymp_{p,q} (\mathbb{E}Y_j^p)^{q/p}.\)

\subsubsection{Outlines of Theorems~\ref{thm:BHO} and \ref{thm:N-iff}}
\begin{enumerate}
 
\item Under bounded hyperbolic oscillation (BHO) conditions,
each dyadic top $T_\delta(I)$ is enclosed between two hyperbolic balls of fixed
radii $r_{\mathrm{below}}(\delta)$ and $r_{\mathrm{above}}(\delta)$.  

\item A graph-coloring argument partitions the family $\{T_\delta(I)\}$ into finitely
many subcollections of disjoint tops, which ensures bounded overlap of their enlargements.

\item Propagating the BHO inequality along chains of intersecting balls yields uniform
measure comparability
\(
\mu(T_\delta(I))\!\le\!C_0\,\mu(B_{\rho_h}(w,r_0)).
\)

  \item A new class of dyadic top maximal operators is introduced in this paper.
This geometric framework allows the radial maximal operator to be dominated by its dyadic top counterpart.

\item Use a Calderón–Zygmund decomposition on maximal dyadic arcs to establish the weak $(1,1)$ boundedness of the dyadic top maximal operator.
\end{enumerate}

\subsubsection{Outline of Theorem~\ref{Hardy-Littlewood maiximal operator}}
(1) The proof uses the local doubling property implied by BHO.
(2)  For each level $\lambda>0$, a Vitali covering of hyperbolic balls yields disjoint
subfamilies whose 5-fold enlargements cover $E_\lambda$.
 (3)  Combine sublinearity and the $L^\infty$ bound with Marcinkiewicz interpolation to establish strong $(p,p)$ boundedness.

\subsubsection{Outline of  Theorem~\ref{thm:WSMP-LD}} 
(1) Use the local doubling condition together with the weighted sub-mean property to replace $B_{\rho_h}(z,r_0)$ by its  enlargement $Q^*$, yielding a localized boxwise estimate for $\sup_{z\in Q}|f(z)|^p$.
(2)  Summing over all dyadic cubes $Q$ and applying the bounded overlap property of $\{Q^*\}$ yields the global embedding
  \(
  \int_{\D}|f|^p\,d\nu
  \;\lesssim\;
  \int_{\D}|f|^p\,d\mu.
  \)

\subsection{Organization}

The remainder of the paper is organized as follows.  
Section~\ref{Sec:functional-framework-symbol-spaces} establishes preliminary issues for symbol spaces.  
Section~\ref{sec:Proof-Theorem-1.8} presents the proof of Theorem~\ref{thm:H-pq},  
and Section~\ref{Sec:proof-tent} proves Theorem~\ref{thm:tent}.  
Section~\ref{N-iff} proves Theorem~\ref{thm:N-iff}.  
Section~\ref{UBD}  deduces Theorem~\ref{thm:BHO}.  
Section~\ref{Hardy-maximal} establishes Theorem~\ref{Hardy-Littlewood maiximal operator}.  
Section \ref{Sec:Carleson-embedding-equivalence} proves  Theorem~\ref{thm:WSMP-LD}.  
An Appendix collects auxiliary lemmas and technical results used in the proofs.

\subsection{Notations}
\begin{itemize}
    \item $\mathbb{D}=\{z\in\mathbb{C}:|z|<1\}$ is the open unit disk in the complex plane.
    \item $B_{\rho_h}(z,r)$ denotes the hyperbolic ball with center $z$ and radius $r$.
    \item $dA(z)=\frac{1}{\pi}\, r\,dr\,d\theta$ denotes the normalized
Lebesgue area measure on $\mathbb{D}$.
\item $A(B_{\rho_h}(z,r))=\int_{B_{\rho_h}(z,r)} d A$ denotes the Euclidean area of a hyperbolic ball.
\item A weight $\omega$ is a positive, measurable function on $\mathbb{D}$.
\item The associated weighted measure is $d\mu(z) = \omega(z)\,dA(z)$.
\item $\mu(Q)=\int_{Q}d\mu $ denotes the measure of the set $Q$.
\item $H(\mathbb{D})$ denotes the space of  analytic functions on the unit disk $\mathbb{D}$.
\item For $0<p<\infty$, $0<r<1$  and $f\in H(\mathbb{D})$, we write
\(M_{p}(f,r)=\left(\int_{0}^{2\pi} |f(re^{i\theta})|^p \frac{d\theta}{2\pi}\right)^{1/p}.\)
\item Denote by $X\lesssim_{a,b}Y$ if $X\le C\,Y$ with $C$ depending only on the displayed parameters; $X\asymp Y$ means both $X\lesssim Y$ and $Y\lesssim X$.
\end{itemize}

\section{Preliminary  issues for random symbol spaces}\label{Sec:functional-framework-symbol-spaces}

\noindent The goal of this section is to build the functional-analytic framework required for Theorems \ref{thm:main-nonradial}, \ref{thm:H-pq} and \ref{thm:tent}.

\subsection{Admissible weight}\label{sec:Admissable-Weight}

\noindent To place the weighted Bergman spaces $A^p_\omega$ in a workable functional--analytic setting, we assume  two mild hypotheses stated next---finite mass and bounded point evaluations. Under these assumptions $A^p_\omega$ is an F-space (Banach if $p\ge1$) with all analytic polynomials contained; see Lemma~\ref{lem:Fspace-full}.
\begin{definition}
We assume that the weight $\omega$ satisfies:
\begin{enumerate}
    \item Finite mass :$\int_{\mathbb{D}}\omega(z)dA(z)<\infty$; and 
    \item Bounded point evaluations: for every $z\in\mathbb{D}$, the functional $L_{z}(f)=f(z)$ is
    continuous on $A_{\omega}^{p}$ and for each compact $K, \sup_{z\in K}\|L_{z}\|<\infty$.
\end{enumerate}
\end{definition}

\medno We further impose   a  mild condition on the (possibly non-radial) weight—a condition we call admissibility. Under admissibility we can describe the symbol space of the weighted Bergman space \(A^p_\omega\) and, subsequently, its mixed-norm and weighted analytic tent counterparts (Theorems \ref{thm:main-nonradial}, \ref{thm:H-pq} and \ref{thm:tent}).

\begin{definition} \label{Admissible-Weight}
    We call a weight $\omega$  on $\mathbb{D} \subseteq \mathbb{C}$ \emph{admissible}  if it satisfies finite mass, bounded point evaluations, and 
\begin{equation}\label{eq:DCP}
\lim_{r\rightarrow1^{-}}\|f_{r}-f\|=0 \quad \text{ for every }f\in A_{\omega}^{p}.
\end{equation}

\end{definition}

\medno In terms of the dilation operators \(T_r\)  \((0<r<1)\), that is, 
\(
(T_r f)(z)=f_r(z):=f(rz),
\)
\eqref{eq:DCP} asserts that $T_{r}\rightarrow I$ strongly; we refer to this as the dilation convergence property.
Beside this, we single out two properties of the operators \(T_r\) and establish precise implication relations among them; see Lemma~\ref{Fundamental-Relationship}.

\begin{definition}
We consider the following properties for $A_{\omega}^{p}$:
\begin{enumerate}
    \item Dilation convergence property: for every $f\in A_{\omega}^{p}$, $\lim_{r\rightarrow1^{-}}\|f_{r}-f\|=0$.
    \item Polynomial density property : the set of   polynomials P is dense in $A_{\omega}^{p}.$
    \item Uniform boundedness of dilations (UBD): the operators $\{T_{r}\}_{0<r<1}$ are uniformly
    bounded; i.e., there exists $C>0$ such that $\|T_{r}f\|\le C\|f\|$ for all $f\in A_{\omega}^{p}$ and $r\in(0,1)$.
\end{enumerate}
\end{definition}

\begin{example}
We list a few representative families to indicate the abundance of admissible non-radial weights; details are routine and omitted.
\begin{enumerate}
\item Separable angular–radial weights. \(\omega(z)=v(|z|)\,h(\arg z)\) with \(v\) any admissible radial weight (e.g., doubling/regular) and \(h\in L^{1}(\mathbb{T})\) bounded above and below away from \(0\). Then \(\omega\) is admissible.

\item Small angular perturbations. \(\omega(z)=v(|z|)\,e^{\psi(\arg z)}\) where \(v\) is admissible radial and \(\psi\in \mathrm{VMO}(\mathbb{T})\cap L^\infty(\mathbb{T})\). Since \(e^{\psi}\) is bounded above/below, \(\omega\) is \(L^\infty\)-equivalent to \(v\) and hence admissible.

\item Piecewise-constant angular factors. \(\omega(z)=v(|z|)\,h(\arg z)\) with admissible radial \(v\) and \(h(\theta)=1+\eta\,\mathbf{1}_{I}(\theta)\) for an arc \(I\subseteq\mathbb{T}\) and \(0<\eta<1\). Here \(h\) is bounded above/below, so \(\omega\) is admissible.
\end{enumerate}

\end{example}

\subsection{A framework for the boundedness of dilation operators}\label{subsec:boundedness-dilation-operator}
A satisfactory framework for this setting exists and is largely familiar to experts. In particular, the dilation convergence property is equivalent to the combination of polynomial density and the uniform boundedness of dilations (UBD); see Lemma~\ref{Fundamental-Relationship} below. This fact, along with the next few lemmas, is  known to specialists, and we include brief proof outlines for completeness.

\begin{lemma}\label{Fundamental-Relationship}
Let $0<p<\infty,$ and we assume the   conditions of finite mass and bounded point evaluations. Then the dilation convergence property  holds if and only if both the polynomial density property  and the uniform boundedness of dilations (UBD) hold.
\end{lemma}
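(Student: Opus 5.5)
The proof splits into the two implications. In both directions the first task is to make sure the objects are well defined. I would show that each $T_r$ maps $A^p_\omega$ into itself, and that polynomials lie in $A^p_\omega$ (the latter uses finite mass). For the first point, take $f\in A^p_\omega$ and $0<r<1$. Then $f_r$ is analytic on a neighbourhood of $\overline{\mathbb D}$, hence bounded, so finite mass gives $\|f_r\|^p\le \sup_{\overline{\mathbb D}}|f_r|^p\int_{\mathbb D}\omega\,dA<\infty$. Throughout I work with the $p$-homogeneous quasi-norm $d(f,g)=\|f-g\|^{\min(p,1)}$, which is a metric.

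\textbf{Dilation convergence $\Rightarrow$ polynomial density.} Fix $f\in A^p_\omega$ and $\varepsilon>0$, and choose $r$ with $\|f_r-f\|<\varepsilon$. The Taylor series of $f_r$ converges uniformly on $\overline{\mathbb D}$, since its terms are $a_n r^n z^n$ with $\sum|a_n|r^n<\infty$. Hence a Taylor polynomial $P$ satisfies $\sup_{\mathbb D}|f_r-P|$ as small as we like. By finite mass this gives $\|f_r-P\|<\varepsilon$, and the quasi-triangle inequality gives $\|f-P\|\lesssim\varepsilon$.

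\textbf{Dilation convergence $\Rightarrow$ UBD.} For fixed $f$, the map $r\mapsto \|T_rf\|$ is bounded on $(0,1)$. Near $r=1$ this follows from dilation convergence. On $(0,r_0]$ it follows from the estimate $\|f_r\|^p\le \sup_{|z|\le r_0}|f(z)|^p\,\omega(\mathbb D)$. So the family $\{T_r\}$ is pointwise bounded. Completeness of $A^p_\omega$ is part of the functional framework: bounded point evaluations that are uniform on compacta make $L^p(\omega)$-Cauchy sequences converge locally uniformly, so the space is an F-space. With completeness in hand, the Banach–Steinhaus theorem for F-spaces yields equicontinuity, that is, a uniform bound $\|T_rf\|\le C\|f\|$. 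The tool here is Baire category applied to the closed sets $\{f:\sup_r\|T_rf\|\le n\}$, combined with $p$-homogeneity of the quasi-norm to pass from equicontinuity to a linear bound.

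\textbf{Polynomial density + UBD $\Rightarrow$ dilation convergence.} This is a standard $\varepsilon/3$ argument. Given $f$, pick a polynomial $P$ with $\|f-P\|<\varepsilon$. Then
\[
\|f_r-f\|\lesssim \|T_r(f-P)\|+\|P_r-P\|+\|P-f\|\le (C+1)\varepsilon+\|P_r-P\|.
\]
Moreover $\|P_r-P\|^p\le \sup_{\mathbb D}|P_r-P|^p\,\omega(\mathbb D)\to0$ as $r\to1^-$, since $P_r\to P$ uniformly on $\overline{\mathbb D}$.

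The main obstacle I expect is the Banach–Steinhaus step. It requires completeness of $A^p_\omega$, which is not stated earlier and must be derived from the hypotheses. It also requires care for $0<p<1$, where one must use the uniform boundedness principle for F-spaces rather than Banach spaces. The key point there is that $p$-homogeneity $\|\lambda f\|=|\lambda|\,\|f\|$ still lets equicontinuity at $0$ be upgraded to an operator bound by scaling.
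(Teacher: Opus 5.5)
Your proposal is correct and follows the same route as the paper: Taylor truncation of $f_r$ plus finite mass for polynomial density, the uniform boundedness principle on the F-space $A^p_\omega$ (completeness coming from bounded point evaluations, as in Lemma~\ref{lem:Fspace-full}) for UBD, and the $\varepsilon/3$ argument for the converse. You are also slightly more careful than the paper in checking pointwise boundedness of $r\mapsto\|T_rf\|$ for $r$ away from $1$, and that same estimate, combined with bounded point evaluations, gives the continuity of each $T_r$ that Banach--Steinhaus requires.
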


\begin{proof}

The implication from the dilation convergence property to UBD is proved in Lemma~\ref{lem:DCP-implies-UBD}, while the implication from the dilation convergence property to the polynomial density property is established in Lemma~\ref{lem:DCP-implies-PDP}. Conversely, the conjunction of the polynomial density property with UBD yields the dilation convergence property, as shown in Lemma~\ref{lem:PDPUBD-implies-DCP}.
\end{proof}

\begin{lemma}\label{lem:Fspace-full}
Let $0<p<\infty$. The space $A^p_\omega$ is a complete metrizable topological vector space (hence an F-space, and in particular a Banach space when $p\ge1$) if and only if bounded point evaluation property  holds. In particular, under the bounded point evaluation property, the canonical embedding $A^p_\omega\hookrightarrow L^p(\mu)$ is closed, hence $A^p_\omega$ is a closed subspace of $L^p(\mu)$.
\end{lemma}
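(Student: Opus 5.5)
The plan is to prove the two implications separately, treating $A^p_\omega$ as the space of analytic $f$ with $\|f\|^p=\int_{\mathbb{D}}|f|^p\,d\mu<\infty$, where $d\mu=\omega\,dA$. For $p\ge1$ this is a norm. For $0<p<1$ the natural metric is $d(f,g)=\|f-g\|^p$, which is $p$-homogeneous and translation invariant, so $A^p_\omega$ is a metrizable topological vector space in either case. Only completeness is at stake, and it is equivalent to the closedness of $A^p_\omega$ inside the complete space $L^p(\mu)$.

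\emph{Bounded point evaluations $\Rightarrow$ completeness.} Take a Cauchy sequence $(f_n)$ in $A^p_\omega$. Completeness of $L^p(\mu)$ gives a limit $F\in L^p(\mu)$. For each compact $K\subset\mathbb{D}$ the assumption $\sup_{z\in K}\|L_z\|=C_K<\infty$ yields
\[
\sup_{z\in K}|f_n(z)-f_m(z)|\le C_K\|f_n-f_m\|,
\]
so $(f_n)$ converges locally uniformly to some $f$, and $f$ is analytic by Weierstrass' theorem. Passing to a subsequence, $f_n\to F$ $\mu$-a.e., hence $F=f$ $\mu$-a.e. Therefore $f\in A^p_\omega$ and $f_n\to f$ in $A^p_\omega$. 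The same argument, run with an arbitrary $L^p(\mu)$-convergent sequence of analytic functions, shows that $A^p_\omega$ is closed in $L^p(\mu)$. This is the ``in particular'' clause.

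\emph{Completeness $\Rightarrow$ bounded point evaluations.} This is the step I expect to be the main obstacle, because continuity of $L_z$ cannot be derived abstractly from completeness alone. My first attempt is the closed graph theorem for F-spaces. Consider the inclusion $A^p_\omega\to H(\mathbb{D})$, where $H(\mathbb{D})$ carries the Fréchet topology of locally uniform convergence; both spaces are F-spaces. To get a closed graph, suppose $f_n\to f$ in $A^p_\omega$ and $f_n\to g$ locally uniformly. A subsequence converges $\mu$-a.e. to $f$, so $f=g$ on $\{\omega>0\}$ up to null sets. Here I would use that $\omega$ is positive (see the notations), so that set has full area measure; since $f-g$ is analytic and vanishes a.e., continuity forces $f=g$. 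The closed graph theorem then makes the inclusion continuous. Point evaluations, and their suprema over compact sets, are continuous on $H(\mathbb{D})$, so after composing we obtain the bounded point evaluation property with the locally uniform bound.

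Two points need care. First, the closed graph theorem must be applied in its F-space version, which is valid for $p<1$ as well. Second, if $\omega$ were allowed to vanish on a set of positive measure, $\|\cdot\|$ would fail to be a norm on analytic functions; positivity of the weight is exactly what excludes this. Some care is also needed in the interpretation: in the converse direction the hypothesis is that $A^p_\omega$, as a subspace of $L^p(\mu)$ with its induced metric, is complete.
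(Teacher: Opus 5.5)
Your proposal is correct and follows essentially the same route as the paper: bounds on compact sets plus completeness of $L^p(\mu)$ for one direction, and the closed graph theorem for the inclusion $A^p_\omega\to H(\mathbb{D})$ for the other. Beyond that, you fill in details the paper omits: you identify the $L^p(\mu)$-limit with the locally uniform limit, you check the closed-graph hypothesis, and you read off $\sup_{z\in K}\|L_z\|<\infty$ directly from continuity of $\sup_K|\cdot|$ on $H(\mathbb{D})$ rather than via the uniform boundedness principle. One side remark is inaccurate: a nonzero analytic function cannot vanish on a set of positive area, so $\|\cdot\|$ is a (quasi-)norm on analytic functions, and your closed-graph step goes through, whenever $\{\omega>0\}$ has positive area, not only when $\omega>0$ everywhere.
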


\begin{proof}
Let $(f_n)$ be Cauchy in $A^p_\omega$. Then 
\(
\sup_{z\in K}|f_n(z)-f_m(z)|\le C_K\|f_n-f_m\|_{A^p_\omega}\xrightarrow[n,m\to\infty]{}0
\) for every compact $K$.
Hence, $(f_n)$ is uniformly Cauchy on each compact subset, and thus converges locally uniformly to some $h\in H(\mathbb{D})$.
Then the  completeness of $A^p_\omega$ follows from the fact that $L^p(\mu)$ is complete. 
Conversely, by the closed graph theorem for $F$-spaces, 
 the identity map
$
I:A^p_\omega\longrightarrow H(\mathbb{D}),
$
is continuous.
For each fixed $z\in\mathbb{D}$, the evaluation $\mathrm{ev}_z:H(\mathbb{D})\to\mathbb{C}$ is continuous, hence $L_z=\mathrm{ev}_z\circ I$ is continuous on $A^p_\omega$, which yields the first part of bounded point evaluation property. Since for each $f\in A^p_\omega$ one has $\sup_{z\in K}|L_z(f)|=\sup_{z\in K}|f(z)|<\infty$, the uniform boundedness principle for F-spaces implies $\sup_{z\in K}\|L_z\|<\infty$, which establishes the second part of bounded point evaluation property.
\end{proof}

\begin{lemma}\label{lem:DCP-implies-UBD}
Let $0<p<\infty$ and we assume the conditions of finite mass and bounded point evaluations. 
If the dilation convergence property holds on $A^p_\omega$, then the dilations 
$\{T_r\}_{0<r<1}$ are uniformly bounded on $A^p_\omega$; that is,
\[
\sup_{0<r<1}\,\|T_r\| \;<\; \infty.
\]
\end{lemma}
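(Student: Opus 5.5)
The plan is to apply the uniform boundedness principle (Banach--Steinhaus) in its F-space form. By Lemma~\ref{lem:Fspace-full}, the bounded point evaluation hypothesis makes $A^p_\omega$ a complete metrizable topological vector space, a Banach space when $p\ge1$ and a $p$-Banach space with $p$-norm $\|f\|^p$ when $0<p<1$. Banach--Steinhaus holds for families of continuous linear operators on an F-space into a quasi-normed space, as a Baire category argument. It then reduces uniform boundedness to two facts: each $T_r$ is a continuous operator on $A^p_\omega$, and the family $\{T_r f\}_{0<r<1}$ is bounded for each fixed $f$.

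Pointwise boundedness comes from the dilation convergence property. Fix $f$ and choose $r_0<1$ with $\|T_rf-f\|\le 1$ for $r\in(r_0,1)$; then $\|T_rf\|\lesssim \|f\|+1$ there, using the quasi-triangle inequality when $p<1$. For $r\in(0,r_0]$, the function $f_r$ is bounded on $\D$ by $\sup_{|z|\le r_0}|f(z)|$. This supremum is finite because $f$ is continuous on the compact disc $\{|z|\le r_0\}$. By finite mass, $\|f_r\|^p\le \sup_{|z|\le r_0}|f|^p\int_\D\omega\,dA<\infty$, uniformly in $r\le r_0$. Hence $\sup_r\|T_rf\|<\infty$.

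The main obstacle is showing that each individual $T_r$ is a bounded operator on $A^p_\omega$; without it Banach--Steinhaus cannot be invoked. For fixed $r$, set $K=\{|z|\le r\}$. By the second part of bounded point evaluations, $C_K=\sup_{z\in K}\|L_z\|<\infty$, so $|f(rz)|\le C_K\|f\|$ for all $z\in\D$. Together with finite mass this gives $\|T_rf\|\le C_K\big(\int_\D\omega\,dA\big)^{1/p}\|f\|$. So $T_r$ is linear and bounded from $A^p_\omega$ into $A^p_\omega$, and in fact it is continuous. The same estimate also gives the small-$r$ bound above directly.

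Applying the uniform boundedness principle to $\{T_r\}_{0<r<1}$ then yields $\sup_{0<r<1}\|T_r\|<\infty$. In the quasi-Banach case this is understood for the $p$-norm, which is equivalent. The uniform bound is obtained directly from the Baire argument: the closed sets $\{f:\sup_r\|T_rf\|\le n\}$ cover the space, so one of them contains a ball. That final step is routine.
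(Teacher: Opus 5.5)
Your proposal is correct and takes the same route as the paper: pointwise boundedness of $\{T_rf\}$ from the dilation convergence property, then the uniform boundedness principle on the F-space $A^p_\omega$ given by Lemma~\ref{lem:Fspace-full}. Your version is more complete than the paper's one-line appeal to ``a convergent sequence is bounded'', because it also verifies two facts the paper leaves implicit: that each $T_r$ is continuous (via $\sup_{|z|\le r}\|L_z\|<\infty$ and finite mass), and that $\|T_rf\|$ is controlled for $r$ bounded away from $1$.
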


\begin{proof}
If the dilation convergence property holds, then $T_r f \to f$ for every $f$. A convergent sequence is bounded, so the family $\{T_r\}$ is pointwise bounded. Since $A^p_\omega$ is an F-space (Lemma \ref{lem:Fspace-full}), the uniform boundedness theorem for F-spaces implies $\{T_r\}$ is uniformly bounded. Then, UBD holds.
\end{proof}

\begin{lemma}\label{lem:DCP-implies-PDP}
Let $0<p<\infty$ and we assume only the finite mass property. If the dilation convergence property  holds on $A^p_\omega$,
then the polynomial density property  holds.
\end{lemma}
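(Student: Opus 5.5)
Fix $f\in A^p_\omega$; the goal is a sequence of polynomials converging to $f$ in $A^p_\omega$. By the dilation convergence property, $\|f_r-f\|\to 0$ as $r\to1^-$. So, given $\varepsilon>0$, I first choose $r<1$ with $\|f_r-f\|<\varepsilon$. It then suffices to approximate the single function $f_r$ by polynomials in the norm of $A^p_\omega$.

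For this, I will use the Taylor expansion $f(z)=\sum_{n\ge0}a_nz^n$, which converges locally uniformly on $\mathbb{D}$. Then $f_r(z)=\sum a_n r^n z^n$, and since $f$ is analytic on $\{|z|<1\}$ and $r<1$, this series converges uniformly on the closed unit disk. Hence the partial sums $P_N(z)=\sum_{n\le N}a_nr^nz^n$ satisfy $\sup_{z\in\mathbb{D}}|f_r(z)-P_N(z)|\to0$ as $N\to\infty$.

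Now the finite mass property converts uniform convergence into norm convergence:
\[
\|f_r-P_N\|^p=\int_{\mathbb{D}}|f_r-P_N|^p\,\omega\,dA\le \sup_{\mathbb{D}}|f_r-P_N|^p\int_{\mathbb{D}}\omega\,dA\longrightarrow0 .
\]
In particular each polynomial, and also $f_r$ itself, lies in $A^p_\omega$. Finally I combine the two approximations with the (quasi-)triangle inequality. For $p\ge1$ this is the Minkowski inequality. For $0<p<1$ I use $\|g+h\|^p\le\|g\|^p+\|h\|^p$, which gives
\[
\|f-P_N\|^p\le \|f-f_r\|^p+\|f_r-P_N\|^p<2\varepsilon^p
\]
for $N$ large. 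This yields density of polynomials.

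The only point requiring any care is the uniform convergence of the Taylor series of $f_r$ on the closed disk. This follows from $\limsup|a_n|^{1/n}\le1$, so that $\sum|a_n|r^n<\infty$. Bounded point evaluations are not needed.
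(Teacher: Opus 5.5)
Your proof is correct and follows the paper's argument essentially verbatim. Both pass from $f$ to $f_r$ by dilation convergence, approximate $f_r$ uniformly on $\mathbb{D}$ by its Taylor partial sums, convert this to norm convergence using $\int_{\mathbb{D}}\omega\,dA<\infty$, and conclude with the triangle inequality (the $p$-triangle inequality $\|g+h\|^p\le\|g\|^p+\|h\|^p$ when $0<p<1$, which corresponds to the paper's use of the metric $d$).
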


\begin{proof}

Let $f \in A^p_\omega$ and $\epsilon > 0$. By dilation convergence property, choose $r$ such that $d(f, f_r) < \epsilon/2$, where $d$ is the metric in $A_\omega^p$.
The function $f_r(z) = f(rz)$ is analytic on $\bar{\D}$, so its Taylor series converges uniformly to $f_r$. We can choose a Taylor polynomial $P$ such that $\sup_{z \in \D} |f_r(z) - P(z)| < \delta$.
Let $W = \int \omega dA<+\infty$. We choose $\delta$ such that $\delta^p W < \epsilon/2$. Then
$ d(f_r, P) <\epsilon/2. $
By the triangle inequality for the metric $d$, we have 
$ d(f, P)< \epsilon.$
Thus, polynomial density property holds.
\end{proof}

\begin{lemma}\label{lem:PDPUBD-implies-DCP}
Let $0<p<\infty$. Assume that on $A^p_\omega$ the polynomial density property  holds and the dilation operators are uniformly bounded.
Then the dilation convergence property  holds:
\(
\lim_{r\uparrow 1}\,\|T_r f-f\|_{A^p_\omega}=0\ \text{for every }f\in A^p_\omega.
\)
\end{lemma}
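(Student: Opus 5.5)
The plan is the standard $\varepsilon/3$-type density argument, done in the metric $d(f,g)=\|f-g\|^{\min(1,p)}$, where $\|\cdot\|$ is the $A^p_\omega$ quasi-norm. For $0<p<1$ the quantity $\|f-g\|^p$ is a genuine metric, and for $p\ge1$ we are in a normed space. Let $C=\sup_{0<r<1}\|T_r\|$ be the UBD constant. Fix $f\in A^p_\omega$ and $\varepsilon>0$. By the polynomial density property, choose a polynomial $P$ with $\|f-P\|<\varepsilon$. Then split
\[
T_rf-f=T_r(f-P)+(T_rP-P)+(P-f),
\]
and estimate the three terms separately, using the ($p$-)triangle inequality, i.e.\ $\|a+b+c\|^{s}\le\|a\|^{s}+\|b\|^{s}+\|c\|^{s}$ with $s=\min(1,p)$.

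First, the outer terms. Since $T_r$ is linear, UBD gives $\|T_r(f-P)\|\le C\|f-P\|<C\varepsilon$ uniformly in $r$, and the third term is $<\varepsilon$ by the choice of $P$.

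Second, the middle term. This is the only place where anything about the weight enters, and we must take care not to assume UBD-type facts for $P$. Write $P(z)=\sum_{k=0}^N a_kz^k$. Then
\[
T_rP-P=\sum_{k=1}^N a_k(r^k-1)z^k ,
\]
so $\sup_{z\in\mathbb D}|T_rP(z)-P(z)|\le\sum_{k\le N}|a_k|(1-r^k)\to0$ as $r\to1^-$. Hence
\[
\|T_rP-P\|^p\le\Big(\sup_{\mathbb D}|T_rP-P|\Big)^p\int_{\mathbb D}\omega\,dA\;\longrightarrow\;0 .
\]
This step uses finite mass, which is implicit in the standing assumption that $A^p_\omega$ contains the polynomials; the lemma does not state it, so we invoke it from the admissibility setting (alternatively, note that polynomial density forces $1\in A^p_\omega$, i.e.\ $\int\omega\,dA<\infty$). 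If we only know $z^k\in A^p_\omega$, we can instead use $\|z^k\|<\infty$ directly together with the coefficient bound.

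Combining the three estimates, $\limsup_{r\to1^-}\|T_rf-f\|^{s}\le (C\varepsilon)^{s}+\varepsilon^{s}$, and since $\varepsilon$ is arbitrary, $T_rf\to f$. The only point requiring care is the middle term. We expect no real obstacle there beyond noting that membership of the monomials in $A^p_\omega$ (equivalently, finite mass) is exactly what makes the polynomial density hypothesis meaningful.
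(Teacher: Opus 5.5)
Your proof is correct and follows essentially the same $\varepsilon/3$ density argument as the paper: approximate $f$ by a polynomial $P$, control $T_r(f-P)$ by UBD, use $T_rP\to P$, and handle $0<p<1$ through the $p$-metric $\|\cdot\|^p$. Your explicit justification of $\|T_rP-P\|\to0$, via uniform convergence on $\mathbb{D}$ and finite mass, supplies a step the paper only asserts.
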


\begin{proof}
 Let $P(z)=\sum_{n=0}^N a_n z^n$ be a polynomial. Then for $1<p<\infty$, 
$
\|P-P_r\|_{A^p_\omega}
\xrightarrow[r\uparrow1]{}0.
$
Thus $T_rP\to P$ in $A^p_\omega$ for every polynomial $P$.
Let $f\in A^p_\omega$ and $\varepsilon>0$. If $p\ge1$, by the polynomial density property, there exists a polynomial $P$ such that
$
\|f-P\|_{A^p_\omega}<\frac{\varepsilon}{2(C+1)},
$ where $C=\sup_{0<r<1}\|T_r\|$.
Moreover, choose $r_0\in(0,1)$ so that $\|P-P_r\|_{A^p_\omega}<\varepsilon/2$ for all $r>r_0$.
Then, for $r>r_0$
\[
\|f-T_r f\|_{A^p_\omega}
\le \|f-P\|_{A^p_\omega}
+ \|P-P_r\|_{A^p_\omega}
+ \|T_r(P-f)\|_{A^p_\omega}
< \tfrac{\varepsilon}{2(C+1)}+\tfrac{\varepsilon}{2}+C\tfrac{\varepsilon}{2(C+1)}
=\varepsilon.
\]
If $0<p<1$, an analogous argument works with metric $\|\cdot\|^p_{A^p_\omega}$. Consequently, dilation convergence property holds for all $0<p<\infty$.
\end{proof}

\subsection{The stochastic effect of the UBD condition}\label{subsec:stochastic-effect-UBD-condition}

In this section, we lay the groundwork for the stochastic arguments. We show that within admissible F-space framework, the   almost sure membership needed for the random symbol space is equivalent to the more tractable condition of having finite moments (Lemma \ref{lem:moment-criteria}). This equivalence is the   bridge that allows us to compute the moment identities in the proofs of our theorems and it is enabled by our UBD assumption, hence the full force of the machinery in \cite{IMRN} comes into  play.

\begin{lemma}\label{lem:moment-criteria}

Let $\omega$ be admissible,  $(X_n)_{n\ge0}$ be a standard random sequence and  $0<p<\infty$.
For $f(z)=\sum_{n\ge0} a_n z^n\in H(\mathbb{D})$, write
\[
S_N:=\sum_{n=0}^{N} a_n X_n\,z^n.
\]
 The following are equivalent:
\begin{enumerate}
\item $\mathcal{R}_X f\in A^p_\omega$ almost surely;
\item $\displaystyle \sup_{N\ge0}\,\|S_N\|_{A^p_\omega}<\infty$ almost surely;
\item $\mathbb{E}\,\|\mathcal{R}_X f\|_{A^p_\omega}^{\,t}<\infty$ for some $t>0$;
\item $\mathbb{E}\,\|\mathcal{R}_X f\|_{A^p_\omega}^{\,t}<\infty$ for every $t>0$.
\end{enumerate}
Moreover, for each $t>0$ the quasi‑norms $\big(\mathbb{E}\,\|\mathcal{R}_X f\|_{A^p_\omega}^{\,t}\big)^{1/t}$ are mutually equivalent
(as $t$ varies), and for each fixed $t$ they are equivalent across the three standard randomizations, with constants depending only on $t$.
\end{lemma}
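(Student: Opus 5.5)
The plan is to view the problem through the general theory of random series in F-spaces (Kahane's contraction and integrability results), transplanted to $A^p_\omega$ via Lemma~\ref{lem:Fspace-full} and the admissibility assumption. By Lemma~\ref{lem:Fspace-full}, $A^p_\omega$ is an F-space, in fact a closed subspace of $L^p(\mu)$. For $p\ge1$ it is a Banach space; for $0<p<1$ it is $p$-normed. Hence the Kahane--Khintchine inequality and the Hoffmann-J{\o}rgensen/L\'evy-type inequalities for sums of independent symmetric vectors are available, in their quasi-Banach versions for $p<1$. The symmetric case is the Bernoulli/Steinhaus/Gaussian one. The Gaussian case reduces to Bernoulli by the contraction principle together with conditioning on $|X_n|$, and Steinhaus reduces to Bernoulli by symmetrization. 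This is the route by which the equivalence across the three randomizations will be obtained at the end.

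\textbf{Step (1)$\Rightarrow$(2).} Suppose $F=\mathcal{R}_Xf\in A^p_\omega$ almost surely. The partial sums $S_N$ are not obviously controlled by $F$, but dilations are. By UBD (Lemma~\ref{lem:DCP-implies-UBD}), $\|F_r\|\le C\|F\|$ for all $r$. I will control $S_N$ through $F_r$ using a contraction principle: conditionally on $X$, the map $F\mapsto S_N$ is not a contraction, so instead I will pass to Fej\'er-type or Ces\`aro means, or compare $S_N$ to $F_{r_N}$ with $r_N=1-1/N$.

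The cleaner approach is L\'evy's inequality. The vectors $u_n=a_nX_nz^n$ are independent and symmetric in the F-space $A^p_\omega$. The series $\sum u_n$ converges in $H(\mathbb{D})$ to $F$, and $F\in A^p_\omega$ almost surely. The It\^o--Nisio theorem then gives convergence in $A^p_\omega$ itself, provided that $\sum u_n$ converges weakly, or in a topology weaker than the norm and separating points, and that the partial sums are tight.

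This tightness is the main obstacle, and it is exactly where UBD and polynomial density enter. For the dilates $F_r=\sum a_nX_nr^nz^n$, convergence in $A^p_\omega$ is trivial. Dilation convergence (admissibility) gives $F_r\to F$ in $A^p_\omega$ as $r\to1$ almost surely. By L\'evy's maximal inequality applied to the symmetric sums $\sum_{n\le N}a_nX_nr^nz^n$ with $r$ fixed, followed by letting $r\to1$ via UBD and Fatou, we obtain
\[
\mathbb{P}\Big(\sup_N\|S_N\|>t\Big)\le 2\,\mathbb{P}\big(\|F\|>t/C\big).
\]
This yields (2).

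\textbf{Step (2)$\Rightarrow$(4).} This is Hoffmann-J{\o}rgensen's inequality, or Kahane's integrability theorem: almost surely bounded partial sums of symmetric independent vectors in a ($p$-)normed space have all moments of the supremum finite. Fatou then transfers this to $\|F\|\le\liminf\|S_N\|$, where the norm is lower semicontinuous under locally uniform convergence by Fatou's lemma applied to $\int|S_N|^p\omega\,dA$.

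\textbf{Remaining implications.} (4)$\Rightarrow$(3) is trivial. (3)$\Rightarrow$(1) holds because a finite moment forces $\|F\|<\infty$ almost surely. The equivalence of the quasi-norms $(\mathbb{E}\|F\|^t)^{1/t}$ for different $t$ is the Kahane--Khintchine inequality in $p$-normed spaces. It is applied first to $S_N$ (or to $F_r$), and then passed to the limit using UBD and monotone convergence as $r\to1$.
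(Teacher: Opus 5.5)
The paper's own proof is one line: it records that admissibility gives $f_r\to f$ in $A^p_\omega$ and defers everything else to Theorem~8 of \cite{IMRN}, i.e.\ to the $p$-Banach Kahane--Khintchine machinery of Lemma~\ref{lemma:F-Kahane-Khintchine}.

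\textbf{Equivalence of (1)--(4) and the $t$-equivalence.} Your proof of these is sound but routed differently. Your L\'evy maximal inequality works for the dilated partial sums $\sum_{n\le N}a_nX_nr^nz^n$, which converge in $A^p_\omega$ for fixed $r$; for $p<1$ it picks up a harmless factor $2^{1/p-1}$. Following it with UBD and the limit $r\to1$ at fixed $N$ gives (1)$\Rightarrow$(2) with an explicit tail bound. It also shows that only UBD, not polynomial density, is needed for (1)--(4). Routing instead through $\|F_r\|\to\|F\|$, as the paper indicates, gives moment constants that do not involve the UBD constant. The It\^o--Nisio/tightness discussion is unnecessary.

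Two small repairs are needed.
\begin{enumerate}
\item For non-radial $\omega$ the map $r\mapsto\|F_r\|$ need not be monotone. Replace ``monotone convergence'' by Fatou ($\|F\|\le\liminf_{r\to1}\|F_r\|$ pointwise) together with $\mathbb{E}\|F_r\|^s\le C^s\,\mathbb{E}\|F\|^s$, or by dilation convergence plus dominated convergence.
\item Hoffmann-J{\o}rgensen does not say that a.s.\ bounded symmetric sums have all moments in general. You need $\mathbb{E}\max_n\|a_nX_nz^n\|^t<\infty$. This holds here: it is a deterministic bound for Bernoulli/Steinhaus, and follows from Gaussian tails and Borel--Cantelli in the Gaussian case. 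For $p<1$, apply it to the subadditive functional $\|\cdot\|^p$.
\end{enumerate}

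\textbf{The gap: equivalence across the three randomizations.} This step fails as written, because each of your reductions gives only one direction.
\begin{enumerate}
\item Symmetrization plus contraction gives Steinhaus $\lesssim$ Bernoulli. The converse, and Bernoulli $\lesssim$ Gaussian, are usually obtained from Jensen's inequality, i.e.\ convexity of the norm, which is unavailable for $0<p<1$.
\item Gaussian $\lesssim$ Bernoulli cannot follow from the contraction principle after conditioning on $|X_n|$: the contraction constant would be $\sup_n|X_n|=\infty$. In a general Banach space this comparison is false; it holds exactly under finite cotype and fails in $c_0$.
\end{enumerate}

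So specific structure of $A^p_\omega$ must enter. The fix is the computation the paper itself uses in Sections~\ref{sec:Proof-Theorem-1.8} and~\ref{Sec:proof-tent}. Since $A^p_\omega$ sits isometrically in $L^p(\mu)$, Tonelli and the scalar Khintchine--Kahane inequality (uniform over the three models) give
\[
\mathbb{E}\,\|\mathcal{R}_Xf\|_{A^p_\omega}^{p}=\int_{\D}\mathbb{E}\,|\mathcal{R}_Xf(z)|^{p}\,d\mu(z)\asymp_p\int_{\D}M_2(f,|z|)^{p}\,d\mu(z).
\]
The right-hand side is independent of the randomization. Combined with your $t$-equivalence, this gives the cross-model comparison for every $t$. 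It also shows that (1)--(4) hold for one standard model if and only if they hold for all three.
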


\begin{proof}
    By the admissible weight assumptions, if $f \in A_\omega^p$, then $f_r \rightarrow f$ in $A_\omega^p$. The remaining discussion is parallel to that of Theorem 8 in \cite{IMRN} and is therefore omitted.
\end{proof}

\section{Proof of Theorem~\ref{thm:H-pq}}\label{sec:Proof-Theorem-1.8}

\begin{proof}
For $f(z)=\sum_{n\ge0} a_n z^n\in H(\D)$, we define the  randomized
Taylor series at radius $r$ by
\[
S_r
  := \sum_{n\ge0} a_n X_n\, r^n e^{in\theta},
\qquad 0\le\theta<2\pi.
\]
The angular $L^p$--norm at radius $r$ (with respect to the disintegrated
circular probability measure $d\omega_{\mathrm{c}}(\theta;r)$) is then given by
\begin{equation}\label{Lp-inquality}
   Z_r
  := \Bigg(\frac1{2\pi}\int_0^{2\pi} |S_r|^p\,d\omega_{\mathrm{c}}(\theta;r)\Bigg)^{\!1/p}. 
\end{equation}
We claim that there exist constants $0<c_{p,q}\le C_{p,q}<\infty$  such that
\begin{equation}\label{eq:angular-moment-step1}
c_{p,q}\,\Bigg(\sum_{n\ge0} |a_n|^2 r^{2n}\Bigg)^{q/2}
\ \le\ 
\mathbb{E}\big[Z_r^q\big]
\ \le\
C_{p,q}\,\Bigg(\sum_{n\ge0} |a_n|^2 r^{2n}\Bigg)^{q/2}.
\end{equation}

\smallno We treat separately the cases $q\ge p$ and $q<p$.

\medskip
\noindent\emph{Upper bound in \eqref{eq:angular-moment-step1}: case $q\ge p$.}
Consider the convex function $\varphi(t)=t^{q/p}$ on $[0,\infty)$.
Applying Jensen's inequality gives
\[
\mathbb{E}\,Z_r^q
  \;=\; \mathbb{E} \varphi\!\left(\int |S_r|^p\,d\omega_{\mathrm{c}}(\theta;r)\right)
  \;\le\;  \int \mathbb{E}\,|S_r|^q\,d\omega_{\mathrm{c}}(\theta;r).
\]
By the (scalar) Khintchine--Kahane inequality (for the chosen standard model),
we have
\[
\mathbb{E}\,|S_r|^q
  \;\asymp_{p,q}\;
  \Bigg(\sum_{n\ge0} |a_n|^2 r^{2n}\Bigg)^{q/2},
\]
with equivalence constants independent of $r$, $\theta$ and the particular choice
of standard randomization. 

\medskip
\noindent\emph{Upper bound in \eqref{eq:angular-moment-step1}: case $q<p$.}
By definition, the H\"older's inequality  and 
 again the Khintchine--Kahane inequality (now with exponent $p$) gives
\[
\mathbb{E}\, Z_r^{\,q}\ \le\  \left( \mathbb{E}\, Z_r^{\,p}\right)^{q/p} \ 
=\left(\int \mathbb{E}|S_r|^{p}\, d\omega_{\mathrm{c}}(\theta;r) \right)^{q/p}
\asymp \Big(\sum_{n\ge0} |a_n|^2 r^{2n}\Big)^{q/2}.
\]
This completes the upper bound in \eqref{eq:angular-moment-step1}.

\medskip
\noindent\emph{Lower bound in \eqref{eq:angular-moment-step1}: case $q\ge p$.}
Since $q\ge p$, the elementary inequality
\[
\Big(\int |S_r|^p\,d\omega_{\mathrm{c}}(\theta;r)\Big)^{q/p}
   \le \int |S_r|^q\,d\omega_{\mathrm{c}}(\theta;r)
\]
holds, which implies $Z_r^q\le \int |S_r|^q\,d\omega_{\mathrm{c}}(\theta;r)$. Taking expectations and using  the  Khintchine--Kahane inequality  yields the desired lower bound.

\medskip
\noindent\emph{Lower bound in \eqref{eq:angular-moment-step1}: case $q<p$.}
In this case, the function $\varphi(t)=t^{q/p}$ is concave on $[0,\infty)$.
Applying Jensen's inequality in the reverse direction yields
\[
\mathbb{E}\,Z_r^q
  = \mathbb{E}\,\varphi\!\left(\int |S_r|^p\,d\omega_{\mathrm{c}}(\theta;r)\right)
  \ge \Bigg(\int \mathbb{E}\,|S_r|^p\,d\omega_{\mathrm{c}}(\theta;r)\Bigg)^{\frac{q}{p}} \gtrsim_{p,q}
  \Bigg(\sum_{n\ge0} |a_n|^2 r^{2n}\Bigg)^{\frac{q}{2}},
\]
which proves the lower bound in the remaining case.

\medno 
Combining all four cases yields \eqref{eq:angular-moment-step1}.

\medno We now pass from the one--radius estimate \eqref{eq:angular-moment-step1} to
a full mixed--norm identity in the radial variable. 
Note that
\[
\|\mathcal{R}_X f\|_{\mathcal{M}(p,q;\omega)}^q
  = \int_0^1 Z_r^q\,d\omega_\mathrm{r}(r).
\]
Taking expectations and applying the point-wise two-sided estimate \eqref{eq:angular-moment-step1} for
each fixed $r\in(0,1)$, we obtain,
\begin{equation}\label{eq:global-moment-start}
\mathbb{E}\,\|\mathcal{R}_X f\|_{\mathcal{M}(p,q;\omega)}^q
  = \int_0^1 \mathbb{E}\big[Z_r^q\big]\,d\omega_\mathrm{r}(r)\asymp \|f\|_{H(2,q;\omega_\mathrm{r})}^q
\end{equation}
with equivalence constants independent of $f$.
 
\medno 
We finally use the global moment identity \eqref{eq:global-moment-start}
to describe the random symbol space $(\mathcal{M}(p,q;\omega))_\star$.

\medskip
\noindent\emph{Inclusion $H(2,q;\omega_\mathrm{r})\subseteq (\mathcal{M}(p,q;\omega))_\star$.}
Let $f\in H(2,q;\omega_\mathrm{r})$ be arbitrary.
Then $\|f\|_{H(2,q;\omega_\mathrm{r})}<\infty$, and the global moment identity
\eqref{eq:global-moment-start} implies in particular that
\[
\mathbb{E}\,\|\mathcal{R}_X f\|_{\mathcal{M}(p,q;\omega)}^{\,q}
  \;\lesssim_{p,q}\; \|f\|_{H(2,q;\omega_\mathrm{r})}^{\,q}
  \;<\;\infty.
\]
By Lemma~\ref{lem:moment-criteria} (applied with $t=q$), this is equivalent to
$\mathcal{R}_X f\in \mathcal{M}(p,q;\omega)$ almost surely.  Hence $f\in(\mathcal{M}(p,q;\omega))_\star$.

\medskip
\noindent\emph{Inclusion $(\mathcal{M}(p,q;\omega))_\star\subseteq H(2,q;\omega_\mathrm{r})$.}
Conversely, suppose $f\in(\mathcal{M}(p,q;\omega))_\star$.  By definition, this means that
$\mathcal{R}_X f\in \mathcal{M}(p,q;\omega)$ almost surely.
Invoking again Lemma~\ref{lem:moment-criteria}, we deduce that there exists
some $t>0$ (and in particular $t=q$) such that
\[
\mathbb{E}\,\|\mathcal{R}_X f\|_{\mathcal{M}(p,q;\omega)}^{\,q}<\infty.
\]
Thus $f\in H(2,q;\omega_\mathrm{r})$.

\medskip
\noindent Now the proof  of Theorem~\ref{thm:H-pq} is complete.
\end{proof}

 \section{Proof of Theorem \ref{thm:tent}}\label{Sec:proof-tent}

\begin{proof}
For each Whitney square $Q_j$ we define a finite localized measure
\[
d\mu_j(z) \;=\; \omega(z)\,\mathbf{1}_{Q_j}(z)\,dA(z).
\]
 On $Q_j$ we then consider
the random local $L^p$-quantity
\[
Y_j
  \;:=\; \Bigg(\int_{Q_j}\big|\mathcal{R}_X f(z)\big|^p\,d\mu_j(z)\Bigg)^{1/p}
  \;=\; \Bigg(\int_{Q_j}\big|\mathcal{R}_X f(z)\big|^p\,\omega(z)\,dA(z)\Bigg)^{1/p}.
\]
By the classical scalar Khintchine–Kahane inequality (for the chosen standard
model and exponent $p$), we have
\[
\mathbb{E}\,|\mathcal{R}_X f(z)|^p
  \;\asymp_p\; \Bigg(\sum_{n\ge0}|a_n|^2|z|^{2n}\Bigg)^{p/2}
  = M_2(f,|z|)^{\,p}.
\]
As a consequence,
\begin{equation}\label{eq:local-p-moment-thm1.10}
\mathbb{E}\,Y_j^{\,p}
  \;=\; \int_{Q_j}\mathbb{E}\,|\mathcal{R}_X f(z)|^p\,d\mu_j(z)
  \;\asymp_{p}\; \int_{Q_j} M_2(f,|z|)^{\,p}\,d\mu_j(z).
\end{equation}
To pass from $\mathbb{E}\,Y_j^{\,p}$ to $\mathbb{E}\,Y_j^{\,q}$ for arbitrary $q>0$, we require a
vector-valued Khintchine–Kahane type result in quasi-Banach spaces. For
convenience, we recall the following lemma.

\begin{lemma}[\cite{IMRN}]\label{lemma:F-Kahane-Khintchine}
Let $\{e_n\}_{n\ge1}$ be a sequence of elements in a $p$-Banach space $\mathcal{X}$,
and let $\{X_n\}_{n\ge0}$ be a standard random sequence. Suppose that
\[
S := \sum_{n=1}^\infty X_n e_n
\]
converges almost surely in $\mathcal{X}$. Then $S\in L^q(\Omega;\mathcal{X})$ for all
$0<q<\infty$, and moreover
\begin{equation}\label{eq:5}
\|S\|_{L^{q_1}(\Omega;\mathcal{X})}
  \;\sim\; \|S\|_{L^{q_2}(\Omega;\mathcal{X})}
\end{equation}
for any $0<q_1,q_2<\infty$, where
\[
\|S\|_{L^q(\Omega;\mathcal{X})}^q
  := \mathbb{E}\big(\|S\|_{\mathcal{X}}^q\big).
\]
\end{lemma}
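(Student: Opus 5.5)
The statement to prove is Lemma \ref{lemma:F-Kahane-Khintchine}, the Kahane--Khintchine inequality in a $p$-Banach space $\mathcal{X}$. The plan follows the classical Kahane route via the contraction principle and the Lévy/Hoffmann--Jørgensen inequalities, with $\|\cdot\|_{\mathcal{X}}^p$ playing the role of a subadditive quasi-norm.

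\emph{Step 1: reduction to symmetric variables.} Bernoulli, Steinhaus and Gaussian sequences are all symmetric. In the Gaussian and Steinhaus cases, conditioning on $|X_n|$ and using $X_n\overset{d}{=}|X_n|\varepsilon_n$ with independent Rademacher $\varepsilon_n$ reduces the problem to Rademacher sums with random coefficients $|X_n|e_n$. I would argue everything for a symmetric sequence, so the partial sums $S_N=\sum_{n\le N}X_ne_n$ satisfy Lévy's inequality. That inequality only uses the reflection trick and the triangle inequality for $d(x,y)=\|x-y\|^p$, so it holds in $p$-Banach spaces:
\[
\mathbb{P}\Big(\sup_N\|S_N\|>t\Big)\le 2\,\mathbb{P}(\|S\|>t).
\]

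\emph{Step 2: tail squaring via Hoffmann--Jørgensen.} Let $\tau_t$ be the first $N$ with $\|S_N\|>t$. For symmetric independent summands and the $p$-triangle inequality, the standard stopping-time argument gives
\[
\mathbb{P}\big(\|S\|^p>2\cdot 3t^p+ \text{(max jump)}\big)\le 4\,\mathbb{P}(\|S\|>t)^2 .
\]
The maximal jump is $\sup_n\|X_ne_n\|$. In the Bernoulli and Steinhaus cases it is bounded by $\sup_n\|e_n\|$, which is finite and controlled by $S$ through Lévy's inequality. In the Gaussian case the jump has Gaussian-type tails, so I would instead treat the Gaussian case via the Borell/Gaussian concentration argument applied to the seminorm-like functional $\|\cdot\|^p$, or by the CLT approximation of Gaussians by normalized Rademacher sums. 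Iterating the squaring yields $\mathbb{P}(\|S\|>Ct)$ decaying exponentially in the Bernoulli/Steinhaus case and like $e^{-ct^2}$ in the Gaussian case, once $t_0$ is chosen with $\mathbb{P}(\|S\|>t_0)\le 1/8$. Almost sure convergence guarantees that such a $t_0$ exists. Hence $\mathbb{E}\|S\|^q<\infty$ for every $q$.

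\emph{Step 3: comparison of moments.} For $q_1<q_2$, Hölder's inequality gives the bound $\|S\|_{L^{q_1}}\le\|S\|_{L^{q_2}}$. For the reverse, choose $t_0=(8\,\mathbb{E}\|S\|^{q_1})^{1/q_1}$. Chebyshev's inequality gives $\mathbb{P}(\|S\|>t_0)\le1/8$, and the iterated tail bound from Step 2 is scale-invariant in $t_0$. Integrating it gives $\mathbb{E}\|S\|^{q_2}\le C^{q_2}t_0^{q_2}$, with $C$ depending only on $p,q_1,q_2$ and the model. For the jump term in the bounded cases I would use $\sup_n\|e_n\|\lesssim\|S\|_{L^{q_1}}$, which follows from the contraction principle (isolating one coordinate via symmetry) plus Paley--Zygmund.

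\emph{Main obstacle.} The hard step is Step 2 in the genuinely quasi-Banach setting. Convexity is unavailable, so the contraction principle and the reduction of Gaussian/Steinhaus to Rademacher must be carried out using only $p$-subadditivity, which costs constants depending on $p$. I would handle this by working throughout with the metric $d(x,y)=\|x-y\|^p$ and checking that each step (Lévy, stopping time, contraction for real coefficients in $[-1,1]$ via extreme-point averaging) uses only the triangle inequality for $d$ and symmetry, never convexity. If the contraction principle fails as stated, I would fall back on the Gaussian/Steinhaus tail bounds being dominated by Rademacher ones through Lévy's inequality alone.
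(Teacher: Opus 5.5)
The paper does not prove this lemma. It is quoted from \cite{IMRN} and used as a black box, with $\mathcal{X}=L^p(\mu_j)$ in Section~\ref{Sec:proof-tent}. Your proposal instead rebuilds Kahane's argument (reflection/L\'evy inequality plus stopping-time squaring of tails) using only the $p$-subadditivity of $\|\cdot\|^p$. That is a legitimate, self-contained route, and it differs from the paper only in that the paper cites rather than proves. What it buys over the citation is transparency: it shows that only the symmetry (rotation invariance) of the $X_n$ and $p$-subadditivity are used, and that the constants depend only on $p,q_1,q_2$ and the model. The citation keeps the paper short but hides where $p<1$ costs something. Three statements in your sketch must be corrected, though none breaks the strategy.

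\emph{First, L\'evy's inequality with the same threshold is false for $p<1$.} The reflection identity $S_n=\tfrac12(S+S')$ only gives $\|S_n\|^p\le 2^{-p}(\|S\|^p+\|S'\|^p)$. The correct statement is therefore $\mathbb{P}(\sup_N\|S_N\|>t)\le 2\,\mathbb{P}(\|S\|>2^{1-1/p}t)$. For a counterexample to your version, take $\ell^{1/2}$ on two coordinates with $\|x\|=(|x_1|^{1/2}+|x_2|^{1/2})^2$, $e_1=(1,1)$, $e_2=(1,-1)$ and random signs: then $\|S_1\|=4$ while $\|S\|=2$ surely. The error is harmless, since squaring only needs a fixed ratio of thresholds. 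Moreover, for Bernoulli/Steinhaus you need no max-jump term at all. Put $R_n=\sum_{k>n}X_ke_k$; given $X_1,\dots,X_n$, the law of $\|X_ne_n+R_n\|=\|e_n+\overline{X_n}R_n\|$ does not depend on $X_1,\dots,X_n$. This decouples $\{\tau=n\}$ from $\{\|S-S_{n-1}\|>t\}$ and gives $\mathbb{P}(\|S\|>2^{2/p-1}u)\le 4\,\mathbb{P}(\|S\|>u)^2$ directly.

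\emph{Second, the conditioning in Step 1 cannot give unconditional moment comparison.} Conditional Kahane yields $\mathbb{E}\|S\|^{q_2}\le C\,\mathbb{E}_X\bigl[(\mathbb{E}_\varepsilon\|S\|^{q_1})^{q_2/q_1}\bigr]$, and Jensen runs the wrong way for $q_2>q_1$. Also, $X_n\stackrel{d}{=}|X_n|\varepsilon_n$ is false for Steinhaus. So the Gaussian case really rests on your fallback, and the CLT route is the clean one:
\begin{itemize}
\item replace $g_n$ by $m^{-1/2}\sum_{k\le m}\varepsilon_{n,k}$;
\item apply the Rademacher tail bound, whose constants are independent of the vectors, to the finite sums;
\item pass to the limit using continuity of $a\mapsto\|\sum_{n\le N}a_ne_n\|$ on $\mathbb{R}^N$;
\item go from $S_N$ to $S$ via $\mathbb{P}(\|S_N\|>t)\le 2\,\mathbb{P}(\|S\|>2^{1-1/p}t)$ and Fatou.
\end{itemize}
Borell's route also works through the Gaussian isoperimetric inequality, which does not need convexity. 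But then you must prove $\sup_{|h|_2\le1}\|\sum h_ne_n\|<\infty$ without duality.

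\emph{Third, extreme-point averaging in the contraction principle uses convexity and is unavailable.} You do not need it. Let $S'$ be $S$ with all signs other than the $n$-th flipped; then $2^p\|e_n\|^p\le\|S\|^p+\|S'\|^p$. This gives $\sup_n\|e_n\|^{q}\lesssim\mathbb{E}\|S\|^{q}$ directly, without Paley--Zygmund.
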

\noindent In our setting, for each fixed $j$, the random function
$
z\mapsto \mathcal{R}_X f(z)
$ takes values in the quasi-Banach space $\mathcal{X} = L^p(\mu_j)$, which is a
$p$-Banach space for $0<p<1$ and a Banach space if $p\ge1$. 
Thus, Lemma~\ref{lemma:F-Kahane-Khintchine} applied with $\mathcal{X}=L^p(\mu_j)$
and $S = \mathcal{R}_X f(\cdot)$ implies that for every $q>0$,
\begin{equation}\label{eq:local-moment-comparison-thm1.10}
\big(\mathbb{E}\,Y_j^{\,q}\big)^{1/q}
  \;\asymp_{p,q}\; \big(\mathbb{E}\,Y_j^{\,p}\big)^{1/p}.
\end{equation}
Combining \eqref{eq:local-p-moment-thm1.10} and 
\eqref{eq:local-moment-comparison-thm1.10}, we obtain 
\begin{equation}\label{tent star}
   \sum_j \mathbb{E}\,Y_j^{\,q}
  \;\asymp_{p,q}\; \sum_j
  \Bigg(\int_{Q_j} M_2(f,|z|)^{\,p}\,\omega(z)\,dA(z)\Bigg)^{\!q/p}. 
\end{equation}
Finally, we invoke the general moment criterion (Lemma~\ref{lem:moment-criteria})
for random series in F-spaces, applied to the F-space
$
X = A(p,q;\omega).
$
Combining this with \eqref{tent star} (taking $t=q$) gives
\[
\mathbb{P}\big(\mathcal{R}_X f\in A(p,q;\omega)\big)=1
\quad\Longleftrightarrow\quad
\|M_2(f,\cdot)\|_{T(p,q;\omega)}^{\,q}<\infty.
\]
This is precisely the statement of Theorem~\ref{thm:tent}, and the proof is complete.
\end{proof}

\section{Proof of Theorem \ref{thm:N-iff}}\label{N-iff}

\begin{proof}
We first prove the sufficiency part.
Fix a function $g\in L^1(\mu)$ and a level $\lambda>0$, and set the level set
\[
E_\lambda := \big\{z\in\D : \mathcal{N}_\mu^\delta g(z)>\lambda\big\}.
\]
By definition of $\mathcal{N}_\mu^\delta$, for every $z\in E_\lambda$, there
exists at least one dyadic arc $I\in\mathcal{D}$ such that $z\in S(I)$ and
\begin{equation}\label{lambda level}
    \frac{1}{\mu\big(T_\delta(I)\big)}
  \int_{T_\delta(I)^{\ast}} |g|\,d\mu
  \;>\; \lambda.
\end{equation}
We now perform a standard Calderón--Zygmund (CZ) stopping-time selection of dyadic arcs.
Among all dyadic arcs $I$ 
satisfying the above inequality, we select a sub-collection $\{I_j\}_j$ of pairwise disjoint 
\emph{maximal} arcs with respect to inclusion.
 The associated Carleson boxes $\{S(I_j)\}_j$ cover the level set:
\begin{equation}\label{covering}
  E_\lambda = \bigcup_j S(I_j).    
\end{equation}
Using the covering property  and the reverse--Carleson condition, we estimate
the measure of the level set:
\begin{align*}
\nu(E_\lambda)
  &= \sum_j \nu\big(S(I_j)\big)
   \;\le\; (1+\kappa)\sum_j \mu\big(T_\delta(I_j)\big).
\end{align*}
By \eqref{lambda level}, together with  the bounded overlap, we obtain that 
\[
\nu(E_\lambda)
  \;\le\; \frac{1+\kappa}{\lambda}\sum_j \int_{T_\delta(I_j)^{\ast}} |g|\,d\mu
  \;\lesssim\; \frac{1+\kappa}{\lambda}\int_{\D} |g|\,d\mu.
\]
The implicit constant in $\lesssim$ depends only on the overlap parameter. This shows that
 $\mathcal{N}_\mu^\delta$ is weak type $(1,1)$.

\medno Next, we prove the necessity part. 
Assume that the top maximal operator $\mathcal{N}_\mu^\delta$ is of
weak type $(1,1)$; that is, there is a constant $C>0$ such that for every
$g\in L^1(\mu)$ and every $\lambda>0$,
\begin{equation}\label{eq:weak11-max}
\nu\big(\{z\in\D : \mathcal{N}_\mu^\delta g(z)>\lambda\}\big)
  \;\le\; \frac{C}{\lambda}\int_{\D} |g|\,d\mu.
\end{equation}
Fix an arbitrary dyadic arc $I\in\mathcal{D}$ and consider the test function
\[
g := \mathbf{1}_{T_\delta(I)^{\ast}}.
\]
By definition of $\mathcal{N}_\mu^\delta$, for every $z\in S(I)$, we have 
\[
\big(\mathcal{N}_\mu^\delta g\big)(z)
  \;\ge\; \frac{1}{\mu\big(T_\delta(I)\big)}\int_{T_\delta(I)^{\ast}} |g|\,d\mu
  \;=\; \frac{\mu\big(T_\delta(I)^{\ast}\big)}{\mu\big(T_\delta(I)\big)}.
\]
Now fix $\epsilon>0$ and set
\[
\lambda :
  = (1-\epsilon)\,\frac{\mu\big(T_\delta(I)^{\ast}\big)}{\mu\big(T_\delta(I)\big)}.
\]
Hence
\[
S(I) \subseteq E_\lambda
  := \big\{z\in\D : \mathcal{N}_\mu^\delta g(z)>\lambda\big\}.
\]
Applying the weak $(1,1)$ estimate \eqref{eq:weak11-max} to this $g$ and $\lambda$,
we obtain there exists $C>1$ such that
\[
\nu\big(S(I)\big)
  \;\le\; \nu(E_\lambda)
  \;\le\; \frac{C}{\lambda} \int_{\D} |g|\,d\mu
  \;=\; \frac{C}{\lambda}\,\mu\big(T_\delta(I)^{\ast}\big)\leq \frac{C}{1-\epsilon}\,\mu\big(T_\delta(I)\big).
\]
Writing $C = 1+\kappa$ for some $\kappa>0$, we obtain
\[
\nu\big(S(I)\big)
  \;\lesssim\; (1+\kappa)\,\mu\big(T_\delta(I)\big),
\]
which is precisely the reverse–Carleson tail condition $\mathrm{RC}^\delta$
(up to equivalent constants).
This completes the proof of Theorem~\ref{thm:N-iff}.
\end{proof}

\section{Proof of Theorem \ref{thm:BHO}}
\label{UBD}

\noindent We now turn to the technical heart of the paper, where the central challenges are confronted.

\begin{proof}
Throughout the proof we fix $0<r_0<r_h$. The standing assumption
$\omega\in\mathrm{BHO}(r_h)$ means that there exists $\Lambda$ such that, for every $z\in\D$, 
\[
\operatorname*{ess\,sup}_{B_{\rho_h}(z,r_h)}\omega
  \;\le\; \Lambda\,
  \operatorname*{ess\,inf}_{B_{\rho_h}(z,r_h)}\omega.
\]
In particular, the same inequality holds with $r_0<r_h$ in place of $r_h$
(with the same $\Lambda$, or after increasing it once and for all).
Let $f$ be analytic and $q>0$. By $\omega \in \mathrm{BHO}(r_h)$, we have 
\begin{align*}
    |f(w)|^q\ & \lesssim \frac{1}{A(B_{\rho_h}(w,r_0))}\int_{B_{\rho_h}(w,r_0)} |f|^q\,dA \\
    &\leq \frac{1}{A(B_{\rho_h}(w,r_0))} \left(\operatorname*{ess\,inf}_{\zeta\in B_{\rho_h}(w,r_0)}\omega(\zeta)\right)^{-1} \int_{B_{\rho_h}(w,r_0)} |f|^q \omega(z) \,dA \\
    &\leq \Lambda \frac{1}{\mu(B_{\rho_h}(w,r_0))}\int_{B_{\rho_h}(w,r_0)} |f|^q\,d\mu. 
\end{align*}

\medno Our task is now to relate the local ball averages
to the averages over dyadic tops $T_\delta(I)$ and their enlargements.
We first show that $T_\delta(I)$ has uniformly bounded hyperbolic diameter,
with a bound depending only on $\delta$.
Take two arbitrary points $\zeta_1,\zeta_2\in T_\delta(I)$.
We connect them by a piecewise smooth curve composed of three segments:
\begin{itemize}
    \item[(i)] a radial segment from $\zeta_1$ to some intermediate point
$r e^{i\theta_1}$ with $$r\in(1-|I|,1-\delta|I|),$$
    \item[(ii)] an angular segment at radius $r$ from $re^{i\theta_1}$ to $re^{i\theta_2}$, and
    \item[(iii)] a radial segment from $re^{i\theta_2}$ to $\zeta_2$.
\end{itemize}
We estimate the hyperbolic length of each part.
 Each of the two radial legs has length at most $2\log(\delta^{-1})$.
Along the angular part at fixed radius $r$, the line element is at most 
$ \frac{2}{\delta}.
$
Therefore
\[
\operatorname{diam}_{\rho_h}\big(T_\delta(I)\big)
 \;\le\; 4 \log(\delta^{-1}) + \frac{2}{\delta}
 \;:=\; r_{\mathrm{above}}(\delta).
\]
Let $\theta_I$ denote the midpoint of the arc $I$, and define the “center” of
$T_\delta(I)$ by
\[
r_c := 1-\frac{1+\delta}{2}\,|I|,
\qquad
z_I := r_c e^{i\theta_I}\in T_\delta(I).
\]
Then $\zeta\in T_\delta(I)$ implies $\rho_h(\zeta,z_I)\le r_{\mathrm{above}}(\delta)$,
so in particular
\begin{equation}\label{B_above}
    T_\delta(I)\subseteq B_{\rho_h}\big(z_I,r_{\mathrm{above}}(\delta)\big).
\end{equation}
For later use we consider the slightly enlarged ball
\[
T_\delta(I)^\ast
   := B_{\rho_h}\big(z_I,r_0+r_{\mathrm{above}}(\delta)\big).
\]
Then for every $w\in T_\delta(I)$ we have
\begin{equation}\label{T control}
   B_{\rho_h}(w,r_0)\subseteq T_\delta(I)^\ast. 
\end{equation}

\smallno Next, we establish a structural decomposition of the family of dyadic tops into finitely many sub-collections, each consisting of pairwise disjoint elements.
Let $\mathfrak{T}:=\{T_\delta(I): I\in\mathcal{D}\}$ be the family of dyadic
tops for a fixed $0<\delta<1$. We briefly recall the geometric fact that
$\mathfrak{T}$ has uniformly bounded overlap.
By the definition of the reverse Carleson condition, we may assume
$
0<\delta<\frac12
$
without loss of generality.  A straightforward count (using that
the lengths are dyadic) shows that each individual top can meet at most
\[
\operatorname{Ovl}(\delta)
   := 2\Big\lceil\frac{\log(\delta^{-1})}{\log 2}\Big\rceil +2
\]
other tops in $\mathfrak{T}$. 

\medno It is convenient to encode this as a simple graph $G$:
each vertex of $G$ corresponds to a top $T_\delta(I)$;
we join two vertices by an edge if and only if the corresponding tops intersect in $\D$.
By the above discussion, the maximum degree of $G$ satisfies
\[
\Delta(G)\le \operatorname{Ovl}(\delta).
\]
A classical graph coloring theorem (e.g. Brooks’ theorem in this simple form)
asserts that any countable graph with maximum degree $\Delta$ can be properly
colored with at most $\Delta+1$ colors. Therefore, $G$ admits a proper coloring
with at most $\operatorname{Ovl}(\delta)+1$ colors. Equivalently, we can
partition $\mathfrak{T}$ into
\[
\mathfrak{T}
 = \bigcup_{k=1}^{\operatorname{Ovl}(\delta)+1}\mathfrak{B}_k,
\]
such that within each subfamily $\mathfrak{B}_k$ the tops are pairwise disjoint.
We denote by $\mathcal{D}_k\subseteq\mathcal{D}$ the corresponding collection
of arcs, so that
\[
\mathfrak{B}_k=\{T_\delta(I): I\in\mathcal{D}_k\}.
\]

\medno
We next show that each top contains a fixed hyperbolic ball centered at $z_I$;
this will be crucial for measure comparability.
We claim that there exists a radius $r_{\mathrm{below}}(\delta)>0$, depending only on $\delta$,
such that
\begin{equation}\label{eq:ball-comparability}
B_{\rho_h}\big(z_I,r_{\mathrm{below}}(\delta)\big)\subseteq T_\delta(I),
\qquad I\in\mathcal{D}.
\end{equation}
To estimate the hyperbolic distance from $z_I$ to the boundary of $T_\delta(I)$,
we consider
\[
\operatorname{dist}_{\rho_h}\big(z_I, \partial T_\delta(I)\big)
 := \inf_{w \in \partial T_\delta(I)} \rho_h(z_I, w).
\]
Using geometric estimates on the tops, one
can find positive constants $C_1(\delta),C_2(\delta)$, depending only on $\delta$,
such that for any boundary point $w\in\partial T_\delta(I)$,
\[
|z_I-w|\ \geq\ C_1(\delta)\,|I|,
\qquad
|1-\overline{z_I} w|\ \leq\ C_2(\delta)\,|I|.
\]
So for $w\in\partial T_\delta(I)$, we obtain
\[
\rho_h(z_I,w)
  \;\ge\; \operatorname{arctanh}\frac{C_1(\delta)}{C_2(\delta)}:=r_{\mathrm{below}}(\delta).
\]
Consequently,
$
B_{\rho_h}\big(z_I, r_{\mathrm{below}}(\delta)\big)
   \subseteq T_\delta(I),
$
which proves \eqref{eq:ball-comparability}.

\medno 
We now combine the disjointness of each family $\mathfrak{B}_k$ with the ball
inclusion \eqref{eq:ball-comparability}. Fix $x\in\D$ and $1\le k\le
\operatorname{Ovl}(\delta)+1$, and define
\[
\mathcal{H}_k(x)
 := \{\, T\in \mathfrak{B}_k : x\in T^\ast \,\}.
\]
By the definition of $T_\delta(I)^\ast$ and $z_I$, we have
\[
\#\mathcal{H}_k(x)
 = \#\{\, I\in \mathcal{D}_k:\ x\in B_{\rho_h}(z_I,r_0+r_{\mathrm{above}}(\delta)) \,\}.
\]
If $x\in T_\delta(I)^\ast=B_{\rho_h}(z_I,r_0+r_{\mathrm{above}}(\delta))$, then
by the triangle inequality and \eqref{eq:ball-comparability},
\[
B_{\rho_h}(z_I,r_{\mathrm{below}}(\delta))
  \subseteq B_{\rho_h}\big(x, r_{\mathrm{below}}(\delta)+r_0+r_{\mathrm{above}}(\delta)\big)
  =: B_{\rho_h}\big(x,R(\delta,r_0)\big).
\]
Moreover, within each $\mathfrak{B}_k$ the balls
$B_{\rho_h}(z_I,r_{\mathrm{below}}(\delta))$ are pairwise disjoint.
We recall that for any $z\in\D$ and $s>0$,
\begin{equation}\label{eq:area-ball-final}
\operatorname{Area}_h\big(B_{\rho_h}(z,s)\big)
 = 4\pi\sinh^2\!\Big(\frac{s}{2}\Big),
\end{equation}
and this quantity is independent of the center $z$ because automorphisms of
$\D$ are hyperbolic isometries. Hence the disjointness gives
\[
\#\mathcal{H}_k(x)\cdot 4\pi\sinh^2\!\Big(\frac{r_{\mathrm{below}}(\delta)}{2} \Big)
 \;\le\; 4\pi\sinh^2\!\Big(\frac{R(\delta,r_0)}{2}\Big).
\]
Thus there exists a constant $N(\delta,r_0)$ such that
\[
\sup_{x\in\D}
  \sum_{T\in\mathfrak{B}_k}\mathbf{1}_{T^\ast}(x)
  \;\le\; N(\delta,r_0),
\]
for every $1\le k\le\operatorname{Ovl}(\delta)+1$.
Summing over $k$, we conclude that the family $\mathfrak{T}$ of tops is
$(\operatorname{Ovl}(\delta)+1)\,N(\delta,r_0)$–overlapped:
\[
\sup_{x\in\D}
  \sum_{I\in\mathcal{D}}\mathbf{1}_{T_\delta(I)^\ast}(x)
  \;\le\; (\operatorname{Ovl}(\delta)+1)\,N(\delta,r_0).
\]

\smallno 
We now show that there exists $C_0=C_0(r_0,\delta,\Lambda,r_h)$ such that for
every dyadic arc $I$ and every $w\in T_\delta(I)$,
\begin{equation}\label{eq:top-ball-comparable}
\mu\big(T_\delta(I)\big)
 \;\le\; C_0\,\mu\big(B_{\rho_h}(w,r_0)\big).
\end{equation}
 Fix $I$ and $w\in T_\delta(I)$. From \eqref{B_above} we know that
$T_\delta(I)\subset B_{\rho_h}(w,r_0+r_{\mathrm{above}}(\delta))=:B_{\mathrm{above}}$. 
To propagate the  bounded hyperbolic oscillation condition  from $B_{\rho_h}(w,r_0)$ to the whole set
$B_{\mathrm{above}}$, we construct chains of overlapping balls of radius $r_0$
joining arbitrary points of $B_{\mathrm{above}}$.
Any two points
$\zeta_1,\zeta_2\in B_{\mathrm{above}}$ can be joined by a hyperbolic geodesic
of length at most $2r_0+2r_{\mathrm{above}}(\delta)$. Choose points
$\xi_1,\dots,\xi_m$ on this geodesic so that
\[
\rho_h(\zeta_1,\xi_1)<r_0,\qquad
\rho_h(\xi_j,\xi_{j+1})< r_0\ (1\le j<m),\qquad
\rho_h(\xi_m,\zeta_2)<r_0,
\]
with
\[
m\le N_0:=1+\Big\lceil\frac{2r_0+2r_{\mathrm{above}}(\delta)}{r_0}\Big\rceil.
\]

\smallno For a measurable set $E$ with positive area, let
\[
\langle\omega\rangle_{E}
 :=\frac{1}{A(E)}\int_E \omega\,dA
\]
denote the average of $\omega$ over $E$. 
Now suppose $B'$ and $B''$ are two radius-$r_0$ balls with nonempty
intersection. Pick $\eta\in B'\cap B''$ such that 
\[
\langle\omega\rangle_{B'}
  \;\le\; \Lambda \,\omega(\eta)
  \;\le\; \Lambda^2\,\langle\omega\rangle_{B''}.
\]
Iterating along the chain of $m$ balls
$B_{\rho_h}(\xi_1,r_0),\dots,B_{\rho_h}(\xi_m,r_0)$,
we obtain 
\[
\omega(\zeta_1)
  \;\le\; \Lambda\,\langle\omega\rangle_{B_{\rho_h}(\xi_1,r_0)}
  \;\le\; \Lambda^{2m-1}\,
       \langle\omega\rangle_{B_{\rho_h}(\xi_m,r_0)}
  \;\le\; \Lambda^{2m}\,\omega(\zeta_2),
\]
which gives
\[
\omega(\zeta_1)\ \le\ \Lambda^{2N_0}\,\omega(\zeta_2),
\qquad\text{for a.e.\ }\zeta_1,\zeta_2\in B_{\mathrm{above}}.
\]
Taking essential supremum over $\zeta_1$ and essential infimum over $\zeta_2$ 
yields
\begin{equation}    \label{BHO on above}
\operatorname*{ess\,sup}_{B_{\mathrm{above}}}\omega
  \;\le\; \Lambda^{2N_0}\,
       \operatorname*{ess\,inf}_{B_{\mathrm{above}}}\omega.
\end{equation}
The inequality \eqref{BHO on above} gives
\[
\mu(B_{\mathrm{above}})
 \;\le\; \Lambda^{2N_0}\,
         \frac{A(B_{\mathrm{above}})}{A(B_{\rho_h}(w,r_0))}
         \,\mu\big(B_{\rho_h}(w,r_0)\big).
\]
It remains to estimate the geometric factor
$A(B_{\mathrm{above}})/A(B_{\rho_h}(w,r_0))$.
Write $s=\tanh(r_0/2)$ and $S=\tanh((r_0+r_{\mathrm{above}}(\delta))/2)$.
By comparing Euclidean balls with hyperbolic balls, one finds
\[
\frac{A(B_{\mathrm{above}})}{A(B_{\rho_h}(w,r_0))}
 = \frac{S^2}{s^2}\,
   \frac{(1-|w|^2 s^2)^2}{(1-|w|^2 S^2)^2}.
\]
For $u:=|w|^2\in[0,1)$, the function
\[
f(u)
 := \frac{(1-us^2)^2}{(1-uS^2)^2}
\]
is increasing, so the worst case is $u\uparrow 1^-$.
Letting $u\to 1$ and using the trigonometric inequalities,
we obtain
\[
\sup_{w\in\D}
 \frac{A(B_{\mathrm{above}})}{A(B_{\rho_h}(w,r_0))}
 = \frac{S^2}{s^2}\cdot\frac{(1-s^2)^2}{(1-S^2)^2}
 = \left(\frac{\sinh(r_0+r_{\mathrm{above}}(\delta))}{\sinh r_0}\right)^{2}
 \;\lesssim_{r_0,\delta}\; 1.
\]
More concretely, one may bound it by
\[
\left(\Big\lceil\frac{r_0+r_{\mathrm{above}}(\delta)}{r_0}\Big\rceil\cosh r_0\right)^2.
\]
This implies \eqref{eq:top-ball-comparable}
holds 
for some constant $C_0=C_0(r_0,\delta,\Lambda,r_h)$. Arguing in the same way and using \eqref{eq:ball-comparability}, we also obtain a constant 
\(C_1 = C_1(r_0,\delta,\Lambda,r_h)\) such that \begin{equation}\label{eq:linfty}
    \mu(T(I)^\ast)\leq C_1\mu(T(I)).
\end{equation}
We now demonstrate that the radial maximal function is pointwise controlled by the top maximal operator, thereby completing the proof.
Combining \eqref{T control} with \eqref{eq:top-ball-comparable},
for every $w\in T_\delta(I)$, we have
\[
|f(w)|^q
 \;\lesssim\; \frac{1}{\mu(B_{\rho_h}(w,r_0))}
              \int_{B_{\rho_h}(w,r_0)} |f|^q\,d\mu
 \;\le\; C_0\,\frac{1}{\mu(T_\delta(I))}
              \int_{T_\delta(I)^\ast} |f|^q\,d\mu.
\]
Thus, for each $z\in\D$,
\begin{align*}
\mathfrak{D}(f)(z)^q
 &:= \sup_{0<r<1}|f(rz)|^q \\[4pt]
 &\le \sup\big\{|f(w)|^q:\ w=rz,\ 0<r<1\big\} \\[4pt]
 &\lesssim \sup\Big\{
    \frac{1}{\mu(T_\delta(I))}
    \int_{T_\delta(I)^\ast} |f|^q\,d\mu
    : I\in\mathcal{D},\ w\in T_\delta(I),\ w=rz,\ 0<r<1\Big\} \\[4pt]
 &= \sup_{\substack{I\in\mathcal{D}\\ z\in S(I)}}
    \frac{1}{\mu(T_\delta(I))}
    \int_{T_\delta(I)^\ast} |f|^q\,d\mu
  \;=:\; \mathcal{N}(|f|^q)(z),
\end{align*}
where $\mathcal{N}$ is the top maximal operator associated with the family
$\{T_\delta(I)\}$.
Fix $0<q<p$. Then
\[
\|\mathfrak{D}(f)\|_{L^p(\mu)}^q
  = \|\mathfrak{D}(f)^q\|_{L^{p/q}(\mu)}
 \;\lesssim\; \|\mathcal{N}(|f|^q)\|_{L^{p/q}(\mu)}.
\] 
By combining \eqref{eq:linfty} with Theorem~\ref{thm:N-iff} and the Marcinkiewicz interpolation theorem, the operator $\mathcal{N}$ is bounded on
$L^{p/q}(\mu)$ under the reverse Carleson assumptions, and hence
\[
\|\mathcal{N}(|f|^q)\|_{L^{p/q}(\mu)}
 \;\lesssim\; \||f|^q\|_{L^{p/q}(\mu)}
 = \|f\|_{L^p(\mu)}^q.
\]
Therefore
\[
\|\mathfrak{D}(f)\|_{L^p(\mu)}
 \;\lesssim\; \|f\|_{L^p(\mu)}.
\]
Finally, for fixed $r\in(0,1)$ and $z\in\D$,
\[
|T_r f(z)| = |f(r z)| \le \mathfrak{D}(f)(z),
\]
so
\[
\|T_r f\|_{L^p(\mu)}
 \le \|\mathfrak{D}(f)\|_{L^p(\mu)}
 \lesssim \|f\|_{L^p(\mu)},
\]
and the bound is uniform in $0<r<1$. Hence
\[
\sup_{0<r<1}\|T_r\|_{A_\omega^p\to A_\omega^p} <\infty,
\]
which is exactly the desired conclusion of Theorem~\ref{thm:BHO}.
This completes the proof.
\end{proof}

\section{Proof of Theorem \ref{Hardy-Littlewood maiximal operator}} 
\label{Hardy-maximal}

\begin{proof}
As in  the proof of Theorem~\ref{thm:BHO}, there exists a constant
$C_{\mathrm{loc}}>0$ (depending only on $r_1,r_0,\Lambda,r_h$)
such that for all $z\in\D$ and all $s$ with $r_1<s\le r_0$,
\begin{equation}\label{eq:local-doubling-5s}
\mu\!\big(B_{\rho_h}(z,5s)\big)
 \;\le\; C_{\mathrm{loc}}\,
        \mu\!\big(B_{\rho_h}(z,s)\big).
\end{equation}
To obtain a weak $(1,1)$ estimate, we fix $\lambda>0$ and consider the level set
\[
E_\lambda
 := \big\{z\in\D: M_{r_1,r_0}g(z)>\lambda\big\}.
\]
The family
$\mathcal{B}:=\{B_{\rho_h}(z,s_z): z\in E_\lambda\}$ forms a covering of $E_\lambda$.
By the Vitali selection principle for hyperbolic balls (stated and proved in
Appendix~\ref{sec:Vitali of hyperbolic ball}), there exists a countable
subfamily of pairwise disjoint balls
$
\{B_j\}_j
   := \{B_{\rho_h}(z_j,s_{z_j})\}_j
   \subseteq \mathcal{B},
$
such that

\[
E_\lambda\ \subseteq\ \bigcup_j 5 B_j
\qquad\text{and}\qquad
\sum_j \mathbf{1}_{5B_j}\ \le\ C(r_0,r_1)\quad\text{pointwise.}
\]
Applying the fixed-scale local doubling property \eqref{eq:local-doubling-5s} gives
\[
\mu(E_\lambda)
  \;\le\; \sum_j \mu(5B_j)
  \;\le\; C_{\mathrm{loc}} \sum_j \mu(B_j).
\]
Summing over $j$ and using that the balls $B_j$ are pairwise disjoint, we obtain
\begin{equation}\label{weak 11}
    \sum_j \mu(B_j)
 \;\le\; \frac{1}{\lambda}\sum_j\int_{B_j}|g|\,d\mu
 \;\le\; \frac{1}{\lambda}\int_{\D} |g|\,d\mu
 = \frac{1}{\lambda}\,\|g\|_{L^1(\mu)},
\end{equation}
which is the desired weak $(1,1)$ estimate for the operator $M_{r_1,r_0}$.

\medno 
Note that 
\begin{equation}\label{eq:Linfty-bound}
\|M_{r_1,r_0}g\|_{L^\infty(\mu)}
  \le \|g\|_{L^\infty(\mu)}.
\end{equation}
 The Marcinkiewicz interpolation theorem applies and yields that
for every $1<p<\infty$,
\[
\|M_{r_1,r_0}g\|_{L^p(\mu)}
  \le C(p,r_1,r_0,\Lambda,r_h)\,\|g\|_{L^p(\mu)}.
\]
This establishes the desired strong $(p,p)$ bound  and completes the proof of Theorem
\ref{Hardy-Littlewood maiximal operator}.
\end{proof}

\section{Proof of Theorem \ref{thm:WSMP-LD}}\label{Sec:Carleson-embedding-equivalence}

\begin{proof}
By an argument analogous to that used in the proof of Theorem~\ref{thm:BHO}, for a fixed hyperbolic scale $r_0>0$, one can construct a dyadic grid $\{Q\}$ together with an associated family of dilated sets $\{Q^{*}\}$ satisfying the following properties:
\begin{enumerate}
    \item [(i)] There exists a constant $\kappa=\kappa\left(r_0\right) \geq 1$ such that the "ball-box sandwich" holds:
    \begin{equation}\label{eq:sanwich}
    B_{\rho_h}\left(z, r_0\right) \subseteq Q^* \subseteq B_{\rho_h}\left(z, \kappa r_0\right) \quad \text { whenever } z \in Q .
    \end{equation}

\item [(ii)] The family $\left\{Q^*\right\}$ has uniformly bounded overlap:
$$
\sum_Q \mathbf{1}_{Q^*} \leq N\left(r_0\right) \quad \text { pointwise on } \mathbb{D} .
$$
\end{enumerate}
Fix a box $Q$ from the grid. For any $z \in Q$,
from \eqref{eq:WSMP},
\begin{equation}\label{ave}
    |f(z)|^{p}\ \le\ \ C_{\mathrm{ws}}\ \frac{1}{\mu(B_{\rho_h}\left(z, r_0\right))}\int_{B_{\rho_h}\left(z,r_0\right)}|f|^{p}\,d\mu.
\end{equation}
Using \eqref{eq:sanwich} and the local doubling property \eqref{eq:LD}, we obtain
\[
\frac{1}{\mu(B_{\rho_h}\left(z, r_0\right))}\int_{B_{\rho_h}\left(z, r_0\right)}|f|^{p}\,d\mu \ \le\ D_\kappa\ \frac{1}{\mu(Q^\ast)}\int_{Q^\ast}|f|^{p}\,d\mu.
\]
Combining this inequality with \eqref{ave} yields the boxwise estimate
\begin{equation}\label{eq:local-SA}
\sup_{z\in Q}|f(z)|^{p}\ \le\ C_{\mathrm{ws}} \ D_\kappa\ \frac{1}{\mu(Q^\ast)}\int_{Q^\ast}|f|^{p}\,d\mu.
\end{equation}

\medno 
We now decompose the integral over $\D$:
\[
\int_{\D}|f|^{p}\,d\nu\ =\ \sum_{Q}\ \int_{Q}|f|^{p}\,d\nu\ \le\ \sum_{Q}\nu(Q)\,\sup_{z\in Q}|f(z)|^{p}.
\]
Applying \eqref{eq:local-SA} to the supremum and using the measure comparison  $$\nu(Q)\le C\,\mu(Q)\le C\,\mu(Q^\ast),$$ we obtain
\[
\int_{\D}|f|^{p}\,d\nu\ \le\ C_{\mathrm{ws}} \,D_\kappa\,C\ \sum_{Q}\int_{Q^\ast}|f|^{p}\,d\mu
\ =\ C_{\mathrm{ws}} \,D_\kappa\,C\ \int_{\D}\Big(\sum_{Q}\mathbf{1}_{Q^\ast}\Big)|f|^{p}\,d\mu.
\]
Finally, applying the bounded overlap property (ii), we conclude that
\[
\int_{\D}|f|^{p}\,d\nu\ \le\ C_{\mathrm{ws}} \,D_\kappa\,N(r_0)\,C\ \int_{\D}|f|^{p}\,d\mu.
\]
\end{proof}

\bignobf{Concluding remarks.} 
In this paper we consider two problems related to the UBD condition on non-radially weighted Bergman spaces. For the first one, namely the Littlewood problem, UBD is assumed and it enables us to borrow the machinery in \cite{IMRN}. Then the second problem in this paper is to provide a concrete, geometric sufficient condition for UBD.
The characterization of weights such that UBD holds remains an elusive open problem.

\section{Appendix}
\subsection{Equivalence of Whitney decompositions}\label{subsec:Equivalence-Whitney-Decompositions}

\begin{definition}[The Whitney Decomposition]\label{Def:Whitney-Decomposition}
   This is a partition of $\mathbb{D}$ into a collection of ``squares" $\{Q_{j}\}$ whose interiors are disjoint
and which satisfy:
\begin{itemize}
    \item $\cup_{j}\overline{Q_{j}}=\mathbb{D}$,
    \item the hyperbolic diameters of the squares are uniformly bounded,
    \item the Euclidean diameter of a square $Q_{j}$ is comparable to its distance from the boundary, i.e.,
    $diam(Q_{j})\approx dist(Q_{j},\partial\mathbb{D})$,
    \item each point in $\mathbb{D}$ is contained in a uniformly bounded number of ``inflated" squares.
\end{itemize}
\end{definition}

\begin{lemma}[Whitney-equivalence for $T(p,q;\omega)$]\label{lem:Whitney-independence}
Let $0<p,q<\infty$, let $\omega\ge0$ be any measurable weight on $\mathbb D$, and let $\mathcal W_1,\mathcal W_2$ be two Whitney decompositions of $\mathbb D$ with the usual uniform constants\footnote{Each $Q\in\mathcal W_i$ has Euclidean diameter comparable to $\mathrm{dist}(Q,\partial\mathbb D)$, hyperbolic diameter uniformly bounded, interiors pairwise disjoint, $\bigcup_Q\overline Q=\mathbb D$, and for a fixed inflation factor $\lambda>1$, the family $\{\lambda Q:Q\in\mathcal W_i\}$ has uniformly bounded overlap.}. Then 
\[
\|g\|_{T_{\mathcal W_1}(p,q;\omega)}
\asymp
\|g\|_{T_{\mathcal W_2}(p,q;\omega)}\qquad\text{for all measurable }g.
\]
The equivalence constants depend only on $p,q$ and on the Whitney parameters of $\mathcal W_1,\mathcal W_2$ (but \emph{not} on $g$ or on $\omega$). In particular, finiteness of the sum in \eqref{eq:def-weighted-tent} is independent of the chosen Whitney decomposition.
\end{lemma}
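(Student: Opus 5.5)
The plan is to prove one inequality, $\|g\|_{T_{\mathcal W_1}}\lesssim\|g\|_{T_{\mathcal W_2}}$; the reverse follows by swapping the roles of the two decompositions. Everything rests on a combinatorial comparability fact. There is an integer $M$, depending only on the Whitney parameters of $\mathcal W_1,\mathcal W_2$, such that:
\begin{itemize}
\item each $Q\in\mathcal W_1$ meets at most $M$ squares of $\mathcal W_2$;
\item each $Q'\in\mathcal W_2$ meets at most $M$ squares of $\mathcal W_1$.
\end{itemize}
To prove this I use the Euclidean conditions. For $Q\in\mathcal W_1$ and $Q'\in\mathcal W_2$ with $Q\cap Q'\neq\emptyset$, we have $\mathrm{diam}(Q)\approx\mathrm{dist}(Q,\partial\mathbb D)$ and the same for $Q'$. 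Since the two squares share a point, their distances to the boundary are comparable, so $\mathrm{diam}(Q)\approx\mathrm{diam}(Q')$. Hence every $Q'$ meeting $Q$ lies in a fixed dilate $cQ$, a Euclidean disc of radius $\approx\mathrm{diam}(Q)$.

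To count these $Q'$, I need each one to have area $\gtrsim\mathrm{diam}(Q')^2$. This is not literally among the listed conditions, so I expect it to be the main obstacle. First I would argue it from the standard meaning of a Whitney ``square'', a region comparable to a Euclidean square (a polar rectangle). If that is not available, I would derive it from the bounded overlap of inflated squares together with the covering property $\bigcup\overline{Q}=\mathbb D$. Failing both, I would state it as part of ``the usual uniform constants'' in the footnote. Once the area bound holds, disjointness of interiors gives $\#\{Q'\in\mathcal W_2: Q'\cap Q\neq\emptyset\}\le M$. There is an alternative route: each $Q'$ meeting $Q$ contains a hyperbolic ball of fixed radius and lies in a hyperbolic ball of fixed radius around a point of $Q$, so one can count hyperbolic areas as in the proof of Theorem~\ref{thm:BHO}, using \eqref{eq:area-ball-final}.

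With $M$ in hand, the estimate itself is short. Since $\mathbb D=\bigcup\overline{Q'}$ and boundaries have measure zero, for each $Q\in\mathcal W_1$ we have
\[
\int_Q|g|^p\omega\,dA\le\sum_{Q'\cap Q\neq\emptyset}\int_{Q'}|g|^p\omega\,dA .
\]
Raise both sides to the power $q/p$. The sum has at most $M$ terms, so
\[
\Big(\sum_{i=1}^M a_i\Big)^{s}\le \max(1,M^{s-1})\sum_{i=1}^M a_i^{s}
\]
holds for all $s=q/p>0$. This gives
\[
\Big(\int_Q|g|^p\omega\Big)^{q/p}\le C_{M,p,q}\sum_{Q'\cap Q\neq\emptyset}\Big(\int_{Q'}|g|^p\omega\Big)^{q/p}.
\]
Sum over $Q\in\mathcal W_1$ and interchange the order of summation. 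Each $Q'$ appears at most $M$ times, by the second half of the comparability fact. Therefore
\[
\|g\|_{T_{\mathcal W_1}}^q\le M\,C_{M,p,q}\,\|g\|_{T_{\mathcal W_2}}^q .
\]
This argument never uses $\omega$ beyond $\omega\ge0$, and it never uses $g$ beyond measurability. So the constants depend only on $p$, $q$, and the Whitney parameters, as the lemma claims. Finiteness of the sum in \eqref{eq:def-weighted-tent} is therefore independent of the chosen decomposition.
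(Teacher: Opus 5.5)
Your argument is essentially the paper's proof: the same two-sided bounded intersection count (diameters comparable on overlapping squares, then a packing bound), the same inequality $(\sum_{i\le M} a_i)^s\le\max(1,M^{s-1})\sum_{i\le M} a_i^s$, and the same interchange of summation. The one difference is that you dominate $\int_{Q\cap Q'}|g|^p\omega\,dA$ by $\int_{Q'}|g|^p\omega\,dA$ and pay the multiplicity $M$, whereas the paper reassembles the pieces $Q\cap Q'$ inside each $Q'$ with the reverse inequality, which gives the sharper constant $N^{|s-1|}$ (exactly $1$ when $p=q$).

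The area lower bound you flag is used tacitly by the paper too: its Fact 2 counts ``disjoint sets of comparable size'' by an area ratio. So building that bound into the Whitney hypotheses is the right fix. Bounded overlap of inflations yields the count directly only if $\lambda Q$ is a neighborhood of $Q$ of radius comparable to $\mathrm{diam}(Q)$. With homothetic dilates it does not: split each dyadic box into ever thinner radial slivers. This satisfies every footnote condition, yet the lemma fails for $q\neq p$ (take $g=\mathbf{1}_Q$, $\omega=1$).
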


\begin{proof}
Write $s:=q/p\in(0,\infty)$. We reduce the proof to two geometric facts about Whitney partitions; both are standard and follow from the defining properties.\medskip

\noindent\emph{Geometric Fact 1 (size comparability on overlaps).} If $Q\in\mathcal W_1$ and $Q'\in\mathcal W_2$ satisfy $Q\cap Q'\neq\varnothing$, then $\mathrm{diam}(Q)\asymp \mathrm{diam}(Q')$ with constants depending only on the Whitney parameters. Consequently, there exists $\lambda\ge1$ (universal) such that $Q'\subseteq \lambda Q$ and $Q\subseteq \lambda Q'$.\medskip

\noindent \emph{Geometric Fact 2 (uniform intersection number).} There exists $N<\infty$ (universal) such that for every $Q\in\mathcal W_1$ the set
\[
\mathcal E(Q):=\{Q'\in\mathcal W_2:\ Q\cap Q'\neq\varnothing\}
\]
has cardinality $\#\mathcal E(Q)\le N$, and symmetrically, for every $Q'\in\mathcal W_2$ the set $\mathcal F(Q'):=\{Q\in\mathcal W_1:\ Q\cap Q'\neq\varnothing\}$ has $\#\mathcal F(Q')\le N$. (This is a packing estimate: by Fact~1, all the $Q'$ intersecting a fixed $Q$ are disjoint sets of comparable size contained in $\lambda Q$, hence their number is bounded by an area ratio.)

\medskip\noindent
We begin by establishing a local estimate. For each $Q\in\mathcal W_1$,
\[
\int_Q |g|^{p}\omega\,dA=\sum_{Q'\in\mathcal E(Q)}\int_{Q\cap Q'} |g|^{p}\omega\,dA.
\]
Hence, using the elementary inequality
\[
\Big(\sum_{k=1}^{m} a_k\Big)^{s}\le
\begin{cases}
m^{\,s-1}\sum_{k=1}^{m} a_k^{s},& s\ge1,\\[2pt]
\sum_{k=1}^{m} a_k^{s},& 0<s\le1,
\end{cases}
\qquad(a_k\ge0),
\]
and the bound $m=\#\mathcal E(Q)\le N$, we get
\[
\Big(\!\int_Q |g|^{p}\omega\,dA\Big)^{\!s}
\le C_s(N)\sum_{Q'\in\mathcal E(Q)}\Big(\!\int_{Q\cap Q'} |g|^{p}\omega\,dA\Big)^{\!s},
\]
with $C_s(N)=\max\{1,\,N^{\,s-1}\}$.

 \medno Summing over $Q\in\mathcal W_1$ and swapping the order of summation yields
\[
\sum_{Q\in\mathcal W_1}\Big(\!\int_Q |g|^{p}\omega\,dA\Big)^{\!s}
\le C_s(N)\sum_{Q'\in\mathcal W_2}\ \sum_{Q\in\mathcal F(Q')}\Big(\!\int_{Q\cap Q'} |g|^{p}\omega\,dA\Big)^{\!s}.
\]
For fixed $Q'$, the family $\{Q\cap Q':\,Q\in\mathcal F(Q')\}$ is pairwise disjoint and forms a partition of $Q'$ (up to a null set). Applying again the elementary inequality, now to the disjoint pieces inside $Q'$, we obtain
\[
\sum_{Q\in\mathcal F(Q')}\Big(\!\int_{Q\cap Q'} |g|^{p}\omega\,dA\Big)^{\!s}
\le D_s(N)\,\Big(\!\int_{Q'} |g|^{p}\omega\,dA\Big)^{\!s},
\]
where $D_s(N)=1$ if $s\ge1$, and $D_s(N)=N^{\,1-s}$ if $0<s\le1$ (use $\sum a_k^{s}\le m^{1-s}(\sum a_k)^{s}$ for $0<s\le 1$ with $m=\#\mathcal F(Q')\le N$). Therefore
\[
\sum_{Q\in\mathcal W_1}\Big(\!\int_Q |g|^{p}\omega\,dA\Big)^{\!s}
\le C_s(N)\,D_s(N)\sum_{Q'\in\mathcal W_2}\Big(\!\int_{Q'} |g|^{p}\omega\,dA\Big)^{\!s}.
\]
This establishes one direction of the equivalence. Reversing the roles of $\mathcal W_1$ and $\mathcal W_2$ yields the converse inequality with an analogous constant. Combining the two bounds yields the desired equivalence with a constant depending only on $s$ and on the Whitney parameters.

\end{proof}

\begin{corollary}[Well-posedness of $T(p,q;\omega)$ and $A(p,q;\omega)$]\label{cor:well-posed}
For $0<p,q<\infty$ and any measurable weight $\omega\ge0$, the definition of $T(p,q;\omega)$ is independent (up to equivalent quasi-norms) of the Whitney decomposition used. In particular, one may write unambiguously
\[
\|g\|_{T(p,q;\omega)}:=\Bigg(\sum_{Q\in\mathcal W}\Big(\int_Q |g|^{p}\omega\,dA\Big)^{\!q/p}\Bigg)^{1/q},
\]
for \emph{any} Whitney decomposition $\mathcal W$. The same holds for the analytic tent space $A(p,q;\omega)=T(p,q;\omega)\cap H(\mathbb D)$.
\end{corollary}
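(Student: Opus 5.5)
The statement to prove is Corollary~\ref{cor:well-posed}. It follows almost immediately from Lemma~\ref{lem:Whitney-independence}, so the plan is simply to unpack that lemma.

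First, fix two Whitney decompositions $\mathcal W_1,\mathcal W_2$ of $\mathbb D$. Both are taken with the usual uniform constants: comparable Euclidean size and distance to $\partial\mathbb D$, bounded hyperbolic diameter, and bounded overlap of the inflations. For any measurable $g$, Lemma~\ref{lem:Whitney-independence} gives
\[
\|g\|_{T_{\mathcal W_1}(p,q;\omega)}\asymp\|g\|_{T_{\mathcal W_2}(p,q;\omega)}.
\]
The constants depend only on $p$, $q$ and the Whitney parameters, not on $g$ or $\omega$. Two consequences follow:
\begin{itemize}
\item the set of $g$ for which the sum in \eqref{eq:def-weighted-tent} is finite is the same for $\mathcal W_1$ and $\mathcal W_2$;
\item the quasi-norms are equivalent on this common set.
\end{itemize}
Hence the phrase ``there exists a Whitney decomposition'' in Definition~\ref{Def:Weighted-Tent-Space} may be replaced by ``for any Whitney decomposition''. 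This justifies the unambiguous formula for $\|g\|_{T(p,q;\omega)}$.

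Second, I will check that $\|\cdot\|_{T_{\mathcal W}(p,q;\omega)}$ is genuinely a quasi-norm for each fixed $\mathcal W$. It is the composition of the $\ell^{q}$ quasi-norm with the local $L^p(\omega\,dA|_Q)$ quasi-norms. The quasi-triangle constant comes from the elementary inequality $(a+b)^s\le \max(1,2^{s-1})(a^s+b^s)$, applied at both levels. Its constant depends only on $p$ and $q$, so equivalent choices of $\mathcal W$ yield equivalent quasi-norms, as the corollary claims.

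Third, the analytic case is immediate: $A(p,q;\omega)=T(p,q;\omega)\cap H(\mathbb D)$, and intersecting with the fixed class $H(\mathbb D)$ preserves both the set equality and the norm equivalence.

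The only point needing care, and the one I expect to be the main obstacle, is that the constants in Lemma~\ref{lem:Whitney-independence} are uniform. The decompositions must share (or each have) fixed Whitney parameters, so that $N$ and $\lambda$ in Geometric Facts~1--2 are finite. I will state this restriction explicitly: ``any Whitney decomposition'' means one satisfying Definition~\ref{Def:Whitney-Decomposition} with some fixed parameters. With that convention nothing further is needed.
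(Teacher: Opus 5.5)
Your proposal is correct and matches how the paper treats this statement: the paper gives no separate proof, and the corollary is read off directly from Lemma~\ref{lem:Whitney-independence}, whose last sentence already records that finiteness of the sum in \eqref{eq:def-weighted-tent} does not depend on the Whitney decomposition; intersecting with $H(\mathbb D)$ then handles $A(p,q;\omega)$. Your extra check of the quasi-triangle inequality and your remark that the constants depend on the Whitney parameters of the pair being compared are harmless additions, though it is the lemma, not the $p,q$-dependence of the quasi-triangle constant, that gives equivalence across decompositions.
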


\subsection{Vitali selection principle}\label{sec:Vitali of hyperbolic ball}

\begin{lemma}\label{vitali-covering-of-ball}
Fix $0<r_1<r_2$.  For any family $\mathcal{F}=\{B_{\rho_h}(z_i,r_i)\}_{i\in \mathcal{J}}$ of hyperbolic balls with $r_1<r_i\le r_2$, there exists a pairwise disjoint subfamily $\mathcal{G}\subseteq\mathcal{F}$  and a constant $c_0$ such that:
\begin{itemize}
\item[(i)] $\bigcup_{B\in\mathcal{F}} B \subseteq \bigcup_{G\in\mathcal{G}} c_0 G$.
\item[(ii)]  The point-wise overlap of the subfamily $\{c_0 G:\ G\in\mathcal{G}\}$ is bounded by $N(r_1,r_2)$; that is,
\[
\sup_{z\in\mathbb{D}}\ \sum_{G\in\mathcal{G}}\mathbf{1}_{c_0 G}(z)\ \le\ N(r_1,r_2).
\]
\end{itemize}
\end{lemma}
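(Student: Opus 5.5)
The plan is to run the classical Vitali greedy selection in the metric space $(\D,\rho_h)$, with $c_0=3$. The truncation $r_1<r_i\le r_2$ is used in two places: to make the selection terminate in countably many steps, and to obtain the bounded overlap in (ii) by a hyperbolic area-packing count.

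\emph{Step 1 (selection).} Among balls that meet, radii differ by at most the ratio $r_2/r_1$, and no ball is arbitrarily small. So I would not use the dyadic-radius layering. Instead I would take $\mathcal{G}$ to be a maximal pairwise disjoint subfamily of $\mathcal{F}$, obtained by Zorn's lemma.

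\emph{Countability.} Each $G=B_{\rho_h}(z,r)\in\mathcal{G}$ contains $B_{\rho_h}(z,r_1)$, which has positive Lebesgue area. Since $\D$ has finite area, only countably many disjoint sets of positive area can exist, so $\mathcal{G}$ is countable.

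\emph{Covering (i).} By maximality, every $B=B_{\rho_h}(z,r)\in\mathcal{F}$ meets some $G=B_{\rho_h}(z',r')\in\mathcal{G}$. The triangle inequality then gives
\[
B\subseteq B_{\rho_h}(z',\,r'+2r)\subseteq B_{\rho_h}\!\big(z',(1+2r_2/r_1)\,r'\big),
\]
using $r\le r_2\le (r_2/r_1)\,r'$. So I take $c_0:=1+2r_2/r_1$, where $c_0G$ means the concentric ball with radius multiplied by $c_0$. If one wants the constant $3$, or the "5" used in Section~\ref{Hardy-maximal}, one can instead select greedily in order of nearly maximal radius. This also makes the balls used there consistent with \eqref{eq:local-doubling-5s}, since those $5B_j$ have radius at most $5r_0$.

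\emph{Step 2 (bounded overlap (ii)).} This is the real content. I would use the formula \eqref{eq:area-ball-final}: the hyperbolic area of $B_{\rho_h}(z,s)$ equals $4\pi\sinh^2(s/2)$, independent of the center, exactly as in the overlap count in the proof of Theorem~\ref{thm:BHO}.

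Fix $x\in\D$ and suppose $x\in c_0G$ with $G=B_{\rho_h}(z',r')\in\mathcal{G}$. Then $\rho_h(x,z')<c_0r_2$, so
\[
B_{\rho_h}(z',r_1)\subseteq G\subseteq B_{\rho_h}(x,\,c_0r_2+r_2).
\]
The sets $B_{\rho_h}(z',r_1)$ for such $G$ are pairwise disjoint. Comparing hyperbolic areas gives
\[
\#\{G\in\mathcal{G}: x\in c_0G\}\le N(r_1,r_2):=\frac{\sinh^2\big((c_0+1)r_2/2\big)}{\sinh^2(r_1/2)},
\]
which is independent of $x$.

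\emph{Main obstacle.} The only delicate point is to measure area with the hyperbolic area element $dA/(1-|z|^2)^2$ rather than Euclidean area. Euclidean areas of hyperbolic balls degenerate near $\partial\D$, but the hyperbolic area is Möbius invariant. It is also worth checking that disjointness of the small inner balls follows from disjointness of the selected $G$'s. It does, since $B_{\rho_h}(z',r_1)\subseteq G$ because $r'>r_1$.

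No doubling of $\mu$ is needed for this lemma. The measure $\mu$ enters only afterward, in Section~\ref{Hardy-maximal}, through \eqref{eq:local-doubling-5s}.
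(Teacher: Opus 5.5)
Your proof is correct, but it takes a different route from the paper's.

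\emph{The paper's route.} It runs a sequential greedy selection. At step $m$ it picks a remaining ball whose radius exceeds half the supremum $R_m$ of the remaining radii, then discards every ball meeting it. This gives the scale-free constant $c_0=5$. The overlap in (ii) is then counted by shrinking the selected balls to radius $s/5$ and comparing hyperbolic areas inside $B_{\rho_h}(x,\tfrac{26}{5}r_2)$.

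\emph{Your route.} You take a Zorn-maximal disjoint subfamily with no ordering by radius. The price is the larger constant $c_0=1+2r_2/r_1$. This is harmless: the truncation $r_1<r_i\le r_2$ makes it a fixed constant, and in Section~\ref{Hardy-maximal} the local doubling at ratio $c_0$ instead of $5$ follows from BHO by the same chain argument. You then pack the inner balls $B_{\rho_h}(z',r_1)$ directly, which avoids the shrinking step.

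\emph{What your route buys.} It makes the covering step rigorous. In $(\D,\rho_h)$ there can be infinitely many disjoint balls of radius $>r_1$. So a selection indexed by $m=1,2,\dots$ does not by itself guarantee that every $B\in\mathcal{F}$ meets a selected ball. For example, with all radii equal, the choices may march off toward $\partial\D$ and never touch a fixed ball, so the paper's ``by maximality'' needs exactly the Zorn step you supply. Your finite-Euclidean-area argument also gives the countability of $\mathcal{G}$, which Section~\ref{Hardy-maximal} needs in order to sum $\mu(5B_j)$.

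\emph{A small inconsistency.} Your opening sentence announces $c_0=3$, which your proof does not deliver. Your aside that near-maximal greedy selection recovers the paper's constant should say $3+\varepsilon$ (or $5$ for the factor $\tfrac12$). That selection would itself need the Zorn step, applied layer by layer in radius, to guarantee exhaustion.
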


\begin{proof}

We proceed in two parts: selection and coverage; then scale-wise overlap. 
 Set \( J_0 := \mathcal{F} \).  
   If \( r_j = 0 \) for all \( j \in J_0 \), then stop (and obtain \( S = \varnothing \), satisfying the conclusion).  
   Otherwise, proceed with the following recursive steps.
For \( m = 1, 2, \ldots \), proceed as follows: Define
   $$
   R_m := \sup\{ r_j : j \in J_{m-1} \}.
   $$
   If \( R_m = 0 \), then stop (since all remaining balls have radius 0, this does not affect the covering).
 Since \( R_m > 0 \), by the property of the supremum, there exists an index 
   \( k_m \in J_{m-1} \) such that
   $
   r_{k_m} > \tfrac{1}{2} R_m.
   $
   (If such an index does not exist, then all \( r_j \le \tfrac{1}{2} R_m \), implying \( R_m \le \tfrac{1}{2} R_m \), a contradiction.)
 Add \( k_m \) to the selected set, and define
   $$
   J_m := \{ j \in J_{m-1} : B_j \cap B_{k_m} = \varnothing \},
   $$
   i.e., \( J_m \) consists of those indices whose corresponding balls are disjoint from \( B_{k_m} \).
 If \( J_m = \varnothing \), stop; otherwise continue to the next step \( m + 1 \).
This procedure yields a finite or infinite index sequence \( k_1, k_2, \ldots \), and we define
$
\mathcal{G} := \{ B_{k_m} \}.
$
By maximality, for each $B=B_{\rho_h}(z,r)\in\mathcal{F}$ there exists $G=B_{\rho_h}(w,s)\in\mathcal{G}$ with $B\cap G\neq\emptyset$ and $s\ge r/2$.
First,
if $\zeta\in B$, then by the triangle inequality,
\[
\rho_h(\zeta,w)
\le\rho_h(\zeta,z)+\rho_h(z,w)\le r+(r+s)\le 5s,
\]
where we used $r\le 2s$. Hence $B\subseteq B_{\rho_h}(w,5s)=5G$, so
\[
\bigcup_{B\in\mathcal{F}}B\subseteq\bigcup_{G\in\mathcal{G}}5G.
\]
This proves (i) with $c_0=5$.

\medno Next,
fix $x\in\mathbb{D}$, and consider
\[
\mathcal{H}(x):=\{\,G=B_{\rho_h}(w,s)\in\mathcal{G}:\ x\in5G\,\}.
\]
For each $G\in\mathcal{H}(x)$, $\rho_h(w,x)<5r_2$. The balls in $\mathcal{H}(x)$ are pairwise disjoint. Shrink each to
\[
\widetilde{G}:=B_{\rho_h}\bigl(w,s/5\bigr).
\]
Then the $\widetilde{G}$ remain disjoint and lie inside
\[
B_{\rho_h}\!\left(x,\,\rho_h(x,w)+\tfrac{s}{5}\right)
\subseteq B_{\rho_h}\!\left(x,\,5s+\tfrac{s}{5}\right)
\subseteq B_{\rho_h}\!\left(x,\tfrac{26}{5}r_2\right).
\]
Thus all $\widetilde{G}$ have radii $\ge\tfrac{1}{5}r_1$ and are contained in a ball of radius $\tfrac{26}{5}r_2$.
Finally,
\[
\#\mathcal{H}_k(x)\le
\frac{\sinh^2(\tfrac{13}{5}r_2)}{\sinh^2(r_1/10)}
=:N(r_1,r_2).
\]
Since $x\in5G$ iff $G\in\mathcal{H}_k(x)$, this yields
\[
\sup_{x\in\mathbb{D}}\ \sum_{G\in\mathcal{G}_k}\mathbf{1}_{5G}(x)
\ \le\ N_(r_1,r_2),
\]
proving (ii).  \qedhere
\end{proof}

\mednobf{Remark.} Let $x\in\mathbb{D}$ and choose a unit-speed hyperbolic geodesic ray $\gamma:[0,\infty)\to\mathbb{D}$ with $\gamma(0)=x$. For $k\ge1$, set $r_k=4^{-k}$ and $z_k=\gamma(2r_k)$. Then the balls $B_k:=B_{\rho_h}(z_k,r_k)$ are pairwise disjoint since for $j>k$,
\[
\rho_h(z_k,z_j)=2|r_k-r_j|\ge \tfrac32 r_k> r_k+r_j,
\]
while $x\in B_{\rho_h}(z_k,3r_k)$ for all $k$ because $\rho_h(x,z_k)=2r_k<3r_k$. Hence a priori bounded overlap for fixed dilates fails if radii are allowed to approach $0$. This is why we impose the (already available) structural lower bound $r\ge r_0$.

\normalsize

\bignobf{Acknowledgement}
X. Fang is supported by National Science and Technology Council (NSTC 112-2115-M-008-010-MY2). S. Hou is supported by National Natural 
 Science Foundation (NNSF) of China (No. 11971340).  Q. Zhou is supported by  National Natural Science Foundation (NNSF) of China (No.12501162), China Postdoctoral Science Foundation (No. 2024M762280) and Natural Science Foundation of Jiangsu Province (No. BK20250832).

\bignobf{Author Contributions}
All authors reviewed the manuscript.

\bignobf{Conflict of Interest}
The authors declare no competing interests.

\providecommand{\bysame}{\leavevmode\hbox to3em{\hrulefill}\thinspace}
\providecommand{\MR}{\relax\ifhmode\unskip\space\fi MR }
\providecommand{\MRhref}[2]{%
  \href{http://www.ams.org/mathscinet-getitem?mr=#1}{#2}
}
\providecommand{\href}[2]{#2}

\end{document}